\documentclass[article]{amsart}
\usepackage{cases}
\usepackage{amsmath, amsfonts, pifont, amssymb}
\usepackage{verbatim}
\usepackage[bookmarks=true]{hyperref}

\usepackage{geometry}
 \geometry{
 a4paper,
 left=25mm,right=25mm,
 top=35mm,
 }
\usepackage{enumerate}
\usepackage{color}

\usepackage{mathtools}
\mathtoolsset{showonlyrefs}

\usepackage{todonotes}

\usepackage{mathtools}

\newtheorem{theorem}{Theorem}[section]
\newtheorem{lemma}[theorem]{Lemma}
\newtheorem{proposition}[theorem]{Proposition}

\newcommand{\R}{\mathbb{R}}
\newcommand{\Z}{\mathbb{Z}}
\newcommand{\T}{\mathbb{T}}
\newcommand{\N}{\mathbb{N}}

\newcommand{\beq}{\begin{equation}}
\newcommand{\eeq}{\end{equation}}
\newcommand{\beqq}{\begin{equation*}}
\newcommand{\eeqq}{\end{equation*}}

\theoremstyle{definition}
\newtheorem{definition}[theorem]{Definition}

\theoremstyle{remark}
\newtheorem{remark}[theorem]{Remark}

\numberwithin{equation}{section}

\newcommand{\abs}[1]{\lvert#1\rvert}
\newcommand{\norm}[1]{\left\|#1\right\|}



\numberwithin{equation}{section}

\begin{document}

\address{Yongming Luo
\newline \indent Institut f\"ur Wissenschaftliches Rechnen, Technische Universit\"at Dresden\indent
\newline \indent  Zellescher Weg 25, 01069 Dresden, Germany.\indent }
\email{yongming.luo@tu-dresden.de}

\address{Xueying  Yu
\newline \indent Department of Mathematics, University of Washington\indent
\newline \indent  C138 Padelford Hall Box 354350, Seattle, WA 98195,\indent }
\email{xueyingy@uw.edu}

\address{Haitian Yue
\newline \indent Institute of Mathematical Sciences, ShanghaiTech University\newline\indent
Pudong, Shanghai, China.}
\email{yuehaitian@shanghaitech.edu.cn}

\address{Zehua Zhao
\newline \indent Department of Mathematics and Statistics, Beijing Institute of Technology, Beijing, China.
\newline \indent MIIT Key Laboratory of Mathematical Theory and Computation in Information Security, Beijing, China.}
\email{zzh@bit.edu.cn}

\title[On well-posedness results for CQNLS on $\mathbb{T}^3$]{On well-posedness results for the cubic-quintic NLS on $\mathbb{T}^3$}
\author{Yongming Luo, Xueying Yu, Haitian Yue and Zehua Zhao}

\subjclass[2020]{Primary: 35Q55; Secondary: 35R01, 37K06, 37L50}
\keywords{Nonlinear Schr\"odinger equation, global well-posedness, perturbation theory}

\begin{abstract}
We consider the periodic cubic-quintic nonlinear Schr\"odinger equation
\begin{align}\label{cqnls_abstract}
(i\partial_t +\Delta )u=\mu_1 |u|^2 u+\mu_2 |u|^4 u\tag{CQNLS}
\end{align}
on the three-dimensional torus $\mathbb{T}^3$ with $\mu_1,\mu_2\in \mathbb{R} \setminus\{0\}$. As a first result, we establish the small data well-posedness of \eqref{cqnls_abstract} for arbitrarily given $\mu_1$ and $\mu_2$. By adapting the crucial perturbation arguments in \cite{zhang2006cauchy} to the periodic setting, we also prove that \eqref{cqnls_abstract} is always globally well-posed in $H^1(\mathbb{T}^3)$ in the case $\mu_2>0$.
\end{abstract}

\maketitle

\setcounter{tocdepth}{1}
\tableofcontents


\parindent = 10pt
\parskip = 8pt

\section{Introduction and main results}
In this paper, we study the cubic-quintic nonlinear Schr\"odinger equation (CQNLS)
\begin{equation}\label{eq: main}
(i\partial_t+\Delta_{x})u=\mu_1|u|^2u+\mu_2|u|^4u
\end{equation}
on the three-dimensional torus $\T^3$, where $\mu_1,\mu_2\in\R\setminus\{0\}$ and $\T=\R/2\pi\Z$. The CQNLS \eqref{eq: main} arises in numerous physical applications such as nonlinear optics and Bose-Einstein condensate. Physically, the nonlinear potentials $|u|^2 u$ and $|u|^4 u$ model the two- and three-body interactions respectively and the positivity or negativity of $\mu_1$ and $\mu_2$ indicates whether the underlying nonlinear potential is repulsive (defocusing) or attractive (focusing). We refer to, for instance, \cite{phy2,phy3,phy1} and the references therein for a more comprehensive introduction on the physical background of the CQNLS \eqref{eq: main}. Mathematically, the CQNLS model \eqref{eq: main} on Euclidean spaces $\R^d$ ($d\leq 3$) has been intensively studied in \cite{carles2020soliton,Carles_Sparber_2021,Cheng2020,MurphyKillipVisanThreshold,killip_visan_soliton,Luo_JFA_2022,Murphy2021CPDE,SoaveSubcritical,SoaveCritical,tao2007nonlinear,zhang2006cauchy}, where well-posedness and long time behavior results for solutions of \eqref{eq: main} as well as results for existence and (in-)stability of soliton solutions of \eqref{eq: main} were well established.

In this paper, we aim to give some first well-posedness results for \eqref{eq: main} in the periodic setting, which, to the best of our knowledge, have not existed to that date. We also restrict ourselves to the most appealing case $d=3$, where the quintic potential is energy-critical. (By `energy-critical', we mean the energy of solution is invariant under the scaling variance. See \cite{Iteam1} for more details.) In this case, the well-posedness of \eqref{eq: main} shall also depend on the profile of the initial data and the analysis becomes more delicate and challenging.

Our first result deals with the small data well-posedness of \eqref{eq: main}, which is given in terms of the function spaces $Z'(I),X^1(I)$ defined in Section \ref{sec Pre} for a given time slot $I$.

\begin{theorem}[Small data well-posedness]\label{thm:lwp}
Consider \eqref{eq: main} on a time slot $I=(-T,T)\subset\R$ with some $T\in(0,\infty)$. Let $u_0\in H^1(\T^3)$ satisfies
$\|u_0\|_{ H^1(\mathbb{T}^3) }\leq E$
for some $E>0$. Then there exists $\delta=\delta(E,T)>0$ such that if
\begin{align}\label{smallness assumption}
\|e^{it\Delta}u_0\|_{Z'(I)}\leq \delta,
\end{align}
then \eqref{eq: main} admits a unique strong solution $u\in X^{1}(I)$ with initial data $u(0,x)=u_0(x)$.
\end{theorem}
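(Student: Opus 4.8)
The plan is to construct the solution by a contraction mapping argument for the Duhamel map
\beq
\Phi(u)(t):=e^{it\Delta}u_0-i\int_0^t e^{i(t-s)\Delta}\parenthese{\mu_1\abs{u}^2u+\mu_2\abs{u}^4u}\,ds
\eeq
on the complete metric space
\beq
\mathcal B:=\bracket{u\in X^1(I):\ \norm{u}_{X^1(I)}\le 2C_0E,\ \norm{u}_{Z'(I)}\le 2\delta}
\eeq
equipped with the $X^1(I)$ metric, where $C_0$ is the constant in the linear bound $\norm{e^{it\Delta}u_0}_{X^1(I)}\less \norm{u_0}_{H^1(\T^3)}$ from Section \ref{sec Pre}. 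The point of carrying the two radii simultaneously is that the energy-critical quintic nonlinearity can only be closed using the smallness of the weaker critical norm $\norm{\cdot}_{Z'(I)}$, whereas the $X^1(I)$ radius is dictated by the (possibly large) energy $E$. Throughout I would write $A=\norm{u}_{X^1(I)}$, $a=\norm{u}_{Z'(I)}$ and use the embedding $X^1(I)\hook Z'(I)$, so that $a\less A$.

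For the quintic contribution I would invoke the energy-critical multilinear estimate of Section \ref{sec Pre}, which in the diagonal case reads
\beq
\norm{\int_0^t e^{i(t-s)\Delta}\parenthese{\abs{u}^4u}\,ds}_{X^1(I)}\less \norm{u}_{X^1(I)}\norm{u}_{Z'(I)}^4=Aa^4 .
\eeq
The decisive structural feature is that four of the five factors are measured in $Z'(I)$ and only one in $X^1(I)$; hence this term is $\less (2C_0E)(2\delta)^4$, which is small \emph{no matter how large $E$ is}, provided $\delta$ is small.

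The main new difficulty, and the step I expect to be the hardest, is the cubic term, which is energy-subcritical and therefore not compatible with the scaling built into the critical spaces $X^1(I),Z'(I)$. Here the plan is to bound the $X^1(I)$ norm of the cubic Duhamel term by the $L^1_tH^1_x(I)$ norm of $\abs{u}^2u$ (the energy estimate for the spaces of Section \ref{sec Pre}), and then to distribute the three factors by H\"older in time and space, placing two undifferentiated factors in a critical Strichartz norm dominated by $\norm{\cdot}_{Z'(I)}$ and the differentiated factor in $X^1(I)$. Because the resulting product exponent is strictly below the critical (scale-invariant) one, the H\"older step in the time variable gains a positive power of $\abs{I}=2T$, leading to an estimate of the schematic form
\beq
\norm{\int_0^t e^{i(t-s)\Delta}\parenthese{\abs{u}^2u}\,ds}_{X^1(I)}\less \abs{I}^{\ta}\,\norm{u}_{X^1(I)}\norm{u}_{Z'(I)}^2=\abs{I}^{\ta}Aa^2
\eeq
for some $\ta>0$. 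At least one factor still lives in $Z'(I)$, so the cubic contribution is controlled by $(2T)^{\ta}(2C_0E)(2\delta)^2$; the prefactor now depends on both $E$ and $T$, which is precisely the source of the dependence $\delta=\delta(E,T)$ in the statement.

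Combining the three bounds gives $\norm{\Phi(u)}_{X^1(I)}\le C_0E+C\sq{(2C_0E)(2\delta)^4+(2T)^{\ta}(2C_0E)(2\delta)^2}$, and estimating the $Z'(I)$ norm of the Duhamel part by its $X^1(I)$ norm through $X^1(I)\hook Z'(I)$ gives $\norm{\Phi(u)}_{Z'(I)}\le \delta+C\sq{(2C_0E)(2\delta)^4+(2T)^{\ta}(2C_0E)(2\delta)^2}$. Choosing $\delta=\delta(E,T)$ small enough that the two bracketed nonlinear terms are at most $C_0E$ and at most $\delta$ respectively yields $\Phi:\mathcal B\to\mathcal B$. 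The same multilinear estimates, applied to the telescoped differences in $\Phi(u)-\Phi(v)$, produce a Lipschitz bound with constant $\le \f12$ after a further shrinking of $\delta$, so $\Phi$ is a contraction on $\mathcal B$ and its unique fixed point is the desired strong solution $u\in X^1(I)$. Uniqueness in $X^1(I)$ then follows from the same difference estimate together with a standard continuity argument subdividing $I$: any two $X^1(I)$ solutions with the same data must coincide on a neighborhood of $t=0$, and this neighborhood can be propagated to all of $I$.
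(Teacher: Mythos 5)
Your fixed-point skeleton is exactly the paper's: the same Duhamel map, the same two-radius complete metric space $\{u\in X^1(I):\|u\|_{X^1(I)}\le 2CE,\ \|u\|_{Z'(I)}\le 2\delta\}$ with the $X^1$ metric, the quintic term closed with one factor in $X^1(I)$ and four in $Z'(I)$, and the cubic term closed with a bound of the schematic form $|I|^{\theta}\|u\|_{X^1(I)}\|u\|_{Z'(I)}^2$. These are precisely the nonlinear estimates \eqref{est:nonlinear1} and \eqref{est:nonlinear2} of Lemma \ref{lem Nonlinear} (with $\theta=\tfrac{1}{20}$), and given them, your self-mapping, contraction and uniqueness steps go through as in the paper.

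The genuine gap is in the step you yourself flag as the hardest: your proposed proof of the cubic estimate. Reducing to an $L_t^1H_x^1$-type bound on $|u|^2u$ via Proposition \ref{prop:dual} and then ``distributing the factors by H\"older, placing two undifferentiated factors in a critical Strichartz norm dominated by $\|\cdot\|_{Z'}$'' is the Euclidean argument, and it does not survive on $\T^3$. On the torus there are no scale-invariant or mixed-norm Strichartz estimates; the only input is the frequency-localized bound \eqref{strichartz}, valid for $p>\tfrac{10}{3}$ and carrying the loss $N^{\frac32-\frac5p}$. Hence any norm you would place a single factor in (say $L_t^2L_x^\infty$ or $L_t^4W_x^{1,4}$, or any substitute controlled by $Z'$ or $X^1$) can only be estimated frequency block by frequency block, and the resulting per-block bounds are uniform in $N$ with no decay, so the dyadic summation diverges. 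The decay that rescues the summation must come from the interaction of high and low frequencies inside the product, i.e.\ from a genuinely bilinear estimate with an off-diagonal gain $\left(\frac{N_2}{N_1}+\frac{1}{N_2}\right)^{\kappa}$: this is exactly the paper's new Lemma \ref{lem:bilinear}, proved by decomposing the high-frequency factor into cubes of side $N_2$, using orthogonality and summability in $Y^0$ together with the $U^p_\Delta$ embeddings, and interpolating between \eqref{eq:dagger} and \eqref{eq:ddagger}. Note also that the factor $|I|^{\frac{1}{20}}$ is generated inside that bilinear estimate (by H\"older in time at the level of a single cube, where the Strichartz exponent $\tfrac{20}{3}$ is admissible), not by a global subcritical H\"older step; and the passage from the frequency-localized bilinear/trilinear estimates to \eqref{est:nonlinear2} still requires the summation argument of \cite{IPRT3}. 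Without this bilinear ingredient your cubic bound cannot be closed, so what you have is the correct statement to aim for, but the proposed route to it would fail.
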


The proof of Theorem \ref{thm:lwp} is based on a standard application of the contraction principle. Nonetheless, one of the major challenges in proving well-posedness of dispersive equations on tori is the rather exotic Strichartz estimates, leading in most cases to very technical and cumbersome proofs. In the energy-subcritical setting, Strichartz estimates for periodic nonlinear Schr\"odinger equations (NLS) were first established by Bourgain \cite{Bourgain1} by appealing  to the number-theoretical methods. In our case, where an energy-critical potential is present, we shall make use of the Strichartz estimates introduced by Herr-Tataru-Tzvetkov \cite{HTT1} based on the atomic space theory, which in turn initiates applications of the function spaces defined in Section \ref{sec Pre}. Notice also that in comparison to the purely quintic NLS model studied in \cite{HTT1}, an additional cubic term should also be dealt in our case. A new bilinear estimate on $\T^3$ will therefore be proved in order to obtain a proper estimate for the cubic potential, and we refer to Lemma \ref{lem:bilinear} for details. For interested readers, we also refer to \cite{CGZ,CZZ,HTT1,HTT2,IPT3,IPRT3,yang2018global,yang2023scattering,YYZ,Z1,Z2,ZhaoZheng} for further well-posedness results for NLS (with single nonlinear potential) on tori or waveguide manifolds based on the atomic space theory.  (See \cite{sire2022scattering,yu2021global} for other dispersive equations on waveguides.)

Despite that small data well-posedness results are satisfactory to certain extent, it is more interesting (and hence also more challenging) to deduce well-posedness results where the initial data are not necessarily small. We focus here on the particular scenario where the quintic potential is repulsive ($\mu_2>0$), which is motivated by the following physical concern: Consider for instance the focusing cubic NLS{\footnote{When $d=1$, the mass-subcritical nature of the nonlinear potential, combining with conservation of mass and energy, guarantees the global well-posedness of \eqref{cubic nls} in $H^1(\R)$ as well as $H^1(\T)$.}}
\begin{align}\label{cubic nls}
(i\partial_t +\Delta)u=-|u|^2 u
\end{align}
on $\R^d$ with $d\in\{2,3\}$. By invoking the celebrated Glassey's identity \cite{Glassey1977} one may construct finite time blow-up solutions of \eqref{cubic nls} for initial data lying in weighted $L^2$-spaces or satisfying radial symmetric conditions, see for instance \cite{Cbook} for a proof. Surprisingly, in contrast to the rigorously derived blow-up results, collapse of the wave function does not appear in many actual experiments. It is therefore suggested to incorporate a higher order repulsive potential into \eqref{cubic nls}, the case that the repulsive potential is taken as the three-body interaction leads to the study of CQNLS \eqref{eq: main}. More interestingly, it turns out that in the presence of a quintic stabilizing potential, \eqref{eq: main} is in fact globally well-posed for arbitrary initial data in $H^1(\R^d)$. While for $d=2$ this follows already from conservation laws and the energy-subcritical nature of \eqref{eq: main} on $\R^2$, the proof in the case $d=3$, where the quintic potential becomes energy-critical, is more involved. A rigorously mathematical proof for confirming such heuristics in $d=3$ was first given by Zhang \cite{zhang2006cauchy}. The idea from \cite{zhang2006cauchy} can be summarized as follows: We may consider \eqref{eq: main} as a perturbation of the three dimensional defocusing quintic NLS
\begin{align}\label{quintic nls}
(i\partial_t +\Delta)u=|u|^4 u
\end{align}
whose global well-posedness in $\dot{H}^1(\R^3)$ was shown in \cite{Iteam1}. We then partition the time slot $I$ into disjoint adjacent small intervals $I=\cup_{j=0}^m\, I_j$. On each of these intervals, the cubic term is expected to be ``small'' because of the smallness of the subinterval, and by invoking a stability result we may prove that \eqref{eq: main} is well-posed on a given $I_j$. Based on the well-posedness result on $I_j$ we are then able to prove the same result for the consecutive interval $I_{j+1}$ and so on. Starting from the interval $I_0$ and repeating the previous procedure inductively over all $I_j$ follows then the desired claim.

Inspired by the result given in \cite{zhang2006cauchy}, we aim to prove the following analogous global well-posedness result for \eqref{eq: main} on $\T^3$ in the case $\mu_2>0$.

\begin{theorem}[Global well-posedness in the case $\mu_2>0$]\label{thm:gwp}
Assume that $\mu_2>0$. Then \eqref{eq: main} is globally well-posed in $H^1(\T^3)$ in the sense that for any $T>0$ and $u_0\in H^1(\T^3)$, \eqref{eq: main} possesses a solution $u\in X^1(I)$ on $I=(-T,T)$ with $u(0)=u_0$.
\end{theorem}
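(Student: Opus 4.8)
The plan is to follow the perturbative scheme of Zhang \cite{zhang2006cauchy} described above, transplanted into the periodic $X^1$-framework of Section \ref{sec Pre}. The first ingredient is an \emph{a priori} bound on the $H^1$-norm of any solution, which is precisely where the hypothesis $\mu_2>0$ enters. Writing the conserved energy and mass of \eqref{eq: main} as
\begin{align}
E(u)=\f12\norm{\nabla u}_{L^2(\T^3)}^2+\f{\mu_1}{4}\norm{u}_{L^4(\T^3)}^4+\f{\mu_2}{6}\norm{u}_{L^6(\T^3)}^6,\qquad M(u)=\norm{u}_{L^2(\T^3)}^2,
\end{align}
I would observe that for $\mu_2>0$ the quintic contribution is coercive, so that the (possibly focusing) cubic term $\f{\mu_1}{4}\norm{u}_{L^4}^4$ can be absorbed: interpolating the $L^4$-norm between $L^2$ and $L^6$ and applying Young's inequality gives $\f{\abs{\mu_1}}{4}\norm{u}_{L^4}^4\le \f{\mu_2}{12}\norm{u}_{L^6}^6+C\,M(u)$. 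Combined with the conservation of $E(u)$ and $M(u)$ along the flow, this yields a uniform bound $\sup_t\norm{u(t)}_{H^1(\T^3)}\le E_0$ with $E_0=E_0(M(u_0),E(u_0))$ depending only on the conserved quantities. This reduces global existence to an \emph{a priori} control of the critical norm $\norm{u}_{X^1(I)}$ on bounded time slots.

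Next I would invoke the global well-posedness theory for the defocusing energy-critical quintic NLS \eqref{quintic nls} on $\T^3$, i.e. the periodic counterpart \cite{IPT3} of \cite{Iteam1}: any $H^1$-datum of size $\le E_0$ launches a quintic solution $v$ with $\norm{v}_{X^1(I)}\le L$ for some $L=L(E_0,\abs{I})<\infty$ on every bounded interval $I$. The strategy is then to treat the CQNLS solution $u$ on a short subinterval as a perturbation of the quintic solution $v$ sharing the same datum at the left endpoint. Regarding $v$ as an approximate solution of \eqref{eq: main}, its error is exactly the cubic term $-\mu_1|v|^2v$, which I would estimate in the dual space $N(I)$ via the new bilinear estimate of Lemma \ref{lem:bilinear} together with Hölder in time. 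The crucial point is that the cubic nonlinearity is energy-\emph{subcritical} (scaling-critical only in $\dot H^{1/2}$), so this estimate carries a genuine positive power $\abs{I}^{\theta}$ of the length of the interval; hence the error can be made as small as desired by shrinking $\abs{I}$, \emph{without} paying with the critical norm.

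The argument is completed by induction over a partition $I=(-T,T)=\bigcup_{j=0}^m I_j$ into adjacent subintervals of length $\le\tau$. Assuming $u$ has been built up to the left endpoint $t_j$ of $I_j$ with $\norm{u(t_j)}_{H^1}\le E_0$, I would let $v_j$ be the quintic solution on $I_j$ with $v_j(t_j)=u(t_j)$, so that $\norm{v_j}_{X^1(I_j)}\le L$. Choosing $\tau=\tau(E_0)$ small enough that the cubic error obeys $\norm{\mu_1|v_j|^2v_j}_{N(I_j)}\le C(E_0)\,\tau^{\theta}$ below the smallness threshold of a stability lemma for \eqref{eq: main} (which rests on the same nonlinear estimates used for Theorem \ref{thm:lwp} and Lemma \ref{lem:bilinear}), I would conclude that the exact solution $u$ of \eqref{eq: main} exists on $I_j$ with $\norm{u}_{X^1(I_j)}\le 2L$ and matches the previously constructed solution at $t_j$. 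Since the $H^1$-bound $E_0$ is regenerated at the new endpoint $t_{j+1}$ by the \emph{a priori} bound above, the same $\tau$ works at every stage; the number $m+1$ of subintervals is finite, and summing the $X^1$-bounds over $j$ yields $\norm{u}_{X^1(I)}<\infty$. Uniqueness is built into the contraction underlying the stability lemma.

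I expect the main obstacle to be the stability lemma itself and, within it, the extraction of the power $\abs{I}^{\theta}$ for the cubic error in the periodic setting. Unlike on $\R^3$, the torus carries no global-in-time Strichartz estimates and the atomic spaces $X^1,Z'$ are scaling-critical, so one cannot naively gain powers of $\abs{I}$ from a critical norm; the subcritical room of the cubic term must be exploited carefully through the bilinear estimate and interpolation with a subcritical Strichartz norm. A secondary delicate point is to verify that the stability threshold, which degrades as the admissible critical size $L$ grows, stays bounded below uniformly along the induction; this holds because $L(E_0,\tau)$ remains bounded by a constant depending only on $E_0$ as $\tau\to0$, so no circularity arises between the choice of $\tau$ and the allowed error size.
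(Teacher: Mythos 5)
Your proposal is correct and takes essentially the same route as the paper: the same coercivity argument for the a priori $H^1$ bound (Lemma \ref{zhang uniform kinetic}), the same quintic black box from \cite{IPT3} (Theorem \ref{thm:black box}), and the same induction over small subintervals in which the cubic term is rendered small through the $|I|^{\frac{1}{20}}$ factor in the nonlinear estimate \eqref{est:nonlinear2}, with the conserved $H^1$ bound regenerating the data size at each step. The one organizational difference is that the stability lemma you invoke for \eqref{eq: main} does not exist off the shelf and is precisely what the paper proves inline: a contraction mapping for the difference equation satisfied by $w=u-v$, run on subintervals where $\|v\|_{Z'(I_j)}\leq \eta$, with inductively growing bounds $(2C)^j|J|^{\frac{1}{20}}$ — so your plan, once that lemma is written out, coincides with the paper's argument.
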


\begin{remark}
We note that one can also obtain the waveguide analogues of Theorem \ref{thm:gwp}, (i.e. considering \eqref{eq: main} posed on $\mathbb{R}^2 \times \mathbb{T}$ and $\mathbb{R} \times \mathbb{T}^2$) with suitable modifications. Moreover, for the $\mathbb{R}^2 \times \mathbb{T}$ case, scattering behavior is also expected according to existing literature (see \cite{Z2}). However, the scattering result require a lot more than this GWP scheme and we leave it for future considerations. 
\end{remark}

\begin{remark}
It is worth mentioning that  the same global well-posedness result for the supercubic-quintic NLS
\[
(i\partial_t +\Delta )u=\mu_1 |u|^{p-1} u+\mu_2 |u|^4 u, \quad\text{for}\quad 3<p<5,
\]
 is expected to be yielded by adapting the nonlinear estimates in Section \ref{sec LWP} into the fractional product case (see \cite{GyuEunLee2019} for reference, see also \cite{zhang2006cauchy} for the Euclidean case). We leave it for interested readers.
\end{remark}

We follow closely the same lines from \cite{zhang2006cauchy} to prove Theorem \ref{thm:gwp}. In comparison to the Euclidean case, there are essentially two main new ingredients needed for the proof of Theorem \ref{thm:gwp}:
\begin{enumerate}[(i)]
\item
The Black-Box-Theory from \cite{Iteam1} is replaced by the one from \cite{IPT3} for \eqref{quintic nls} on $\T^3$.
\item
The estimates are correspondingly modified (in a very technical and subtle way) in order to apply the Strichartz estimates based on the atomic space theory.
\end{enumerate}
We refer to Section \ref{sec GWP} for  the proof of Theorem \ref{thm:gwp} in detail. For further applications of such interesting perturbation arguments on NLS with combined powers, we also refer to \cite{tao2007nonlinear}.

\begin{remark}
By a straightforward scaling argument it is not hard to see that both Theorems \ref{thm:lwp} and \ref{thm:gwp} extend verbatim to the case where $\T^3$ is replaced by any rational torus. Such direct scaling argument, however, does not apply to irrational tori. Nevertheless, thanks to the ground breaking work of Bourgain and Demeter \cite{BD} we also know that the Strichartz estimates established in \cite{HTT1} are in fact  available for irrational tori, by which we are thus able to conclude that Theorems \ref{thm:lwp} and \ref{thm:gwp} indeed remain valid for arbitrary tori regardless of their rationality. For simplicity we will keep working with the torus $\T^3$ in the rest of the paper.
\end{remark}

We outline the structure of the rest of the paper. In Section \ref{sec Pre}, we summarize the notations and definitions which will be used throughout the paper and define the function spaces applied in the Cauchy problem \eqref{eq: main}. In Sections \ref{sec LWP} and \ref{sec GWP}, we prove Theorems \ref{thm:lwp} and \ref{thm:gwp} respectively.


\subsection*{Acknowledgment} Y. Luo was funded by Deutsche Forschungsgemeinschaft (DFG) through the Priority Programme SPP-1886 (No. NE 21382-1). H. Yue was supported by the Shanghai Technology Innovation Action Plan (No. 22JC1402400), a Chinese overseas high-level young talents program (2022) and the start-up funding of ShanghaiTech University. Z. Zhao was supported by the NSF grant of China (No. 12101046, 12271032), Chinese overseas high-level young talents program (2022) and the Beijing Institute of Technology Research Fund Program for Young Scholars.

\section{Preliminaries}\label{sec Pre}
In this section, we first discuss notations used in the rest of the paper, introduce the function spaces with their properties that we will be working on, and list some useful tools from harmonic analysis.
\subsection{Notations}
We use the notation $A\lesssim B$ whenever there exists some positive constant $C$ such that $A\leq CB$. Similarly we define $A\gtrsim B$ and we use $A\sim B$ when $A\lesssim B\lesssim A$. For simplicity, we hide in most cases the dependence of the function spaces on their spatial domain in their indices. For example $L_x^2=L^2(\T^3)$, $\ell_k^2=\ell^2(\Z^3)$
and so on. However, when the space is involved with time we still display the underlying temporal interval such as $L_{t,x}^p(I)$, $L_t^pL_x^q(I)$, $L_t^\infty \ell_k^2(\R)$ etc. We also frequently write $\|\cdot\|_p:=\|\cdot\|_{L_{x}^p}$.

\subsection{Fourier transforms and Littlewood-Paley projections}
Throughout the paper we use the following Fourier transformation on $\mathbb{T}^3$:
\begin{equation*}
    (\mathcal{F} f)(\xi)=\widehat{f}(\xi)=(2\pi)^{-\frac32}\int_{\T^d}f(x)e^{-ix\cdot \xi} \, dx
\end{equation*}
for $\xi\in \mathbb{Z}^3$. The corresponding Fourier inversion formula is then given by
\begin{equation*}
f(x)=(2\pi)^{-\frac32} \sum_{\xi\in \mathbb{Z}^3}  (\mathcal{F} f)(\xi)e^{ix\cdot \xi}.
\end{equation*}
By definition, the Schr{\"o}dinger propagator $e^{it\Delta}$ is defined by
\begin{equation*}
    \left(\mathcal{F} e^{it\Delta}f\right)(\xi)=e^{-it|\xi|^2}(\mathcal{F} f)(\xi).
\end{equation*}

Next we define the Littlewood-Paley projectors. We fix some even decreasing function $\eta\in C_c^\infty(\R;[0,1])$ satisfying $\eta(t)\equiv 1$ for $|t|\leq 1$ and $\eta(t)\equiv 0$ for $|t|\geq 2$. For a dyadic number $N\geq 1$ define $\eta_N:\Z^3\to [0,1]$ by
\begin{gather*}
\eta_N(\xi)=\eta(|\xi|/N)-\eta(2|\xi|/N),\quad N\geq 2,\\
\eta_N(\xi)=\eta(|\xi|),\quad N=1.
\end{gather*}
Then the Littlewood-Paley projector $P_N$ ($N\geq 1$) is defined as the Fourier multiplier with symbol $\eta_N$. For any $N\in (0,\infty)$, we also define
\begin{equation}
P_{\leq N}:=\sum_{M\in 2^{\N},M\leq 
N}P_M,\quad P_{> N}:=\sum_{M\in 2^{\N},M>N}P_M.
\end{equation}

\subsection{Strichartz estimates}
As already pointed out in the introductory section, unlike the Euclidean case, the Strichartz estimates on (rational or irrational) tori are generally proved in a highly non-trivial way and in most cases only frequency-localized estimates can be deduced. For our purpose we will make use of the following Strichartz estimate proved by Bourgain and Demeter \cite{BD} (see also \cite{Bourgain1,KV1}).

\begin{proposition}[Frequency-localized Strichartz estimates on  $\T^3$, \cite{BD}]
Consider the linear Schr\"odinger propagator $e^{it\Delta}$ on a (rational or irrational) three-dimensional torus. Then for $p>\frac{10}{3}$ we have for any time slot $I$ with $|I|\leq 1$
\begin{equation}\label{strichartz}
\|e^{it\Delta} P_N  f\|_{L^p_{t,x}(I \times \T^3)} \lesssim_p N^{\frac{3}{2}-\frac{5}{p}} \|P_N  f\|_{L^2_x (\T^3)}.
\end{equation}
\end{proposition}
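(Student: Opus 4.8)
The plan is to reduce \eqref{strichartz} to a discrete restriction (exponential-sum) estimate for the paraboloid and then feed that into the $\ell^2$-decoupling theorem of Bourgain and Demeter, which is the genuine analytic engine behind \cite{BD}. Writing $a_\xi:=\eta_N(\xi)\widehat{f}(\xi)$, the left-hand side of \eqref{strichartz} is the spacetime norm of the exponential sum
\begin{equation*}
F(t,x)=\sum_{\xi\in\Z^3,\,|\xi|\sim N}a_\xi\,e^{i(x\cdot\xi-t|\xi|^2)},
\end{equation*}
whose frequencies $(\xi,|\xi|^2)$ lie exactly on the paraboloid $\{(\omega,|\omega|^2)\}\subset\R^4$. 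Thus it suffices to show $\|F\|_{L^p_{t,x}(I\times\T^3)}\lesssim N^{\frac32-\frac5p}\|a\|_{\ell^2_\xi}$, and I note that $\frac32-\frac5p=\frac d2-\frac{d+2}p$ with $d=3$, so that $p_c:=\frac{10}{3}=\frac{2(d+2)}d$ is precisely the critical exponent for $\ell^2$-decoupling of the paraboloid in $\R^{d+1}=\R^4$.

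First I would parabolically rescale. Setting $\omega=\xi/N$, $y=Nx$ and $s=N^2t$ turns the phase into $y\cdot\omega+s|\omega|^2$, sends the frequencies to the $N^{-1}$-separated subset $\{\xi/N\}$ of the unit paraboloid, and maps $I\times\T^3$ onto a region of spatial extent $\sim N$ and temporal extent $N^2|I|$. Here the hypothesis $|I|\le1$ is exactly what is needed: the rescaled region then fits inside a single ball $B_{N^2}$ of radius $\delta^{-1}$, with $\delta:=N^{-2}$ the natural scale at which to decouple. I emphasize that this argument never uses periodicity of $t\mapsto e^{-it|\xi|^2}$ (which would fail on irrational tori); working directly on the short slot $I$ is what makes the estimate insensitive to the rationality of the torus.

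Next I would apply $\ell^2$-decoupling at scale $\delta=N^{-2}$: partitioning the $\delta$-neighborhood of the paraboloid into caps of tangential size $\delta^{1/2}=N^{-1}$, each cap isolates exactly one rescaled frequency, so the theorem yields, at the critical exponent $p_c$,
\begin{equation*}
\|F\|_{L^{p_c}(B_{N^2})}\lesssim_\epsilon N^\epsilon\Big(\sum_{\xi}\|F_\xi\|_{L^{p_c}(B_{N^2})}^2\Big)^{1/2},
\end{equation*}
where each single-cap piece $F_\xi$ is a modulated bump of constant modulus $\sim|a_\xi|$. Evaluating $\|F_\xi\|_{L^{p_c}(B_{N^2})}$ explicitly from the measure of $B_{N^2}$ and undoing the rescaling recovers the $\ell^2$-sum $\|a\|_{\ell^2}$ together with the factor $N^{\frac32-\frac5p}$; faithfully tracking these normalizations so that the power of $N$ comes out correctly is the principal bookkeeping task.

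Finally, since the range of interest is $p>\frac{10}{3}$ \emph{strictly}, I would upgrade the $N^\epsilon$-loss at $p_c$ to the clean power $N^{\frac32-\frac5p}$. The elementary bound $\|F\|_{L^\infty_{t,x}}\le\sum_{|\xi|\sim N}|a_\xi|\lesssim N^{3/2}\|a\|_{\ell^2}$ is $\epsilon$-free and saturates the target at $p=\infty$; combining it with the critical decoupling estimate and exploiting the strict gap $p-p_c>0$ absorbs the loss and produces the constant $\lesssim_p$ claimed in \eqref{strichartz} (cf.\ the exposition in \cite{KV1}). The hard part is of course the $\ell^2$-decoupling theorem itself, which I would invoke as a black box from \cite{BD}; granting it, the only real work is the careful scaling bookkeeping of the preceding paragraph together with the $\epsilon$-removal, which is exactly where the strict inequality $p>\frac{10}{3}$ is used.
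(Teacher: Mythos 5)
Your overall route --- reduction to an exponential sum over lattice points on the paraboloid, parabolic rescaling to decoupling scale $\delta=N^{-2}$ on a ball of radius $N^{2}$, and invocation of the Bourgain--Demeter $\ell^2$-decoupling theorem --- is exactly the argument behind the result being quoted; note the paper itself offers no proof of this proposition, citing it as a black box from \cite{BD} (see also \cite{KV1}). Your identification of $p_c=\tfrac{10}{3}=\tfrac{2(d+2)}{d}$ as the critical decoupling exponent, your use of $|I|\leq 1$ to fit the rescaled region inside a single ball $B_{N^2}$, and your observation that no time-periodicity (hence no rationality of the torus) enters are all correct. One item you dismiss as bookkeeping deserves emphasis: after rescaling, the spatial period is $\sim N$ while the ball has radius $N^2$, so $B_{N^2}$ contains $\sim N^{3}$ fundamental domains; one must use the spatial periodicity of the exponential sum to relate $\|F\|_{L^p(B_{N^2})}$ to $\sim N^{3/p}$ times the norm over a single period column. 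If one merely bounds the norm over the (rescaled) region by the norm over the ball, the powers of $N$ do not close --- the answer comes out too large by exactly $N^{3/p}$.

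The genuine gap is your final step. Interpolating the critical-exponent decoupling bound $\|F\|_{L^{p_c}}\lesssim_\epsilon N^{\epsilon}\|a\|_{\ell^2}$ against the trivial bound $\|F\|_{L^\infty}\lesssim N^{3/2}\|a\|_{\ell^2}$ does \emph{not} remove the $\epsilon$-loss: with $\theta=p_c/p\in(0,1)$ one obtains
\begin{equation}
\|F\|_{L^p}\leq \|F\|_{L^{p_c}}^{\theta}\|F\|_{L^\infty}^{1-\theta}\lesssim_\epsilon N^{\frac32-\frac5p+\epsilon\theta}\|a\|_{\ell^2},
\end{equation}
so the loss is merely diluted, never eliminated; and since the implicit constant $C_\epsilon$ blows up as $\epsilon\to0$, one cannot send $\epsilon\to 0$ to recover the clean constant $\lesssim_p$. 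The same defect afflicts any soft variant (e.g.\ integrating the Chebyshev level-set bound): strict supercriticality $p>p_c$ alone gives no mechanism to absorb $N^{\epsilon}$. The proposition as stated, with no epsilon, is precisely the scale-invariant Strichartz estimate of Killip--Visan \cite{KV1}, and removing the loss for $p$ strictly above $p_c$ requires a genuine epsilon-removal argument in the spirit of Bourgain (level-set decompositions into sparse collections of balls, dual Tomas--Stein-type analysis), or the argument of \cite{KV1} itself; this is a substantive piece of analysis, not normalization tracking. In short: your sketch correctly proves the estimate \emph{with} an $N^\epsilon$ loss for all $p\geq\tfrac{10}{3}$ (modulo the tiling computation above), but to obtain the statement actually claimed you must either reproduce an epsilon-removal argument or, as the paper does, cite it.
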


\subsection{Function spaces}
Next, we define the function spaces and collect some of their useful properties which will be used for the Cauchy problem \eqref{eq: main}. We begin with the definitions of $U^p$- and $V^p$-spaces introduced in \cite{HadacHerrKoch2009}.

\begin{definition}[$U^p$-spaces]\label{def up}
Let $1\leq p < \infty$, $\mathcal{H}$ be a complex Hilbert space and $\mathcal{Z}$ be the set of all finite partitions $-\infty<t_0<t_1<...<t_K\leq \infty$ of the real line. A $U^p$-atom is a piecewise constant function $a:\mathbb{R} \rightarrow \mathcal{H}$ defined by
\begin{align*}
a=\sum_{k=1}^{K}\chi_{[t_{k-1},t_k)}\phi_{k-1},
\end{align*}
where $\{t_k\}_{k=0}^{K} \in \mathcal{Z}$ and $\{\phi_k\}_{k=0}^{K-1} \subset \mathcal{H}$ with $\sum_{k=0}^{K}\|\phi_k\|^p_{\mathcal{H}}=1$. The space $U^p(\mathbb{R};\mathcal{H})$ is then defined as the space of all functions $u:\mathbb{R}\rightarrow \mathcal{H}$ such that $u=\sum_{j=1}^{\infty}\lambda_j a_j$ with $U^p$-atoms $a_j$ and $\{\lambda_j\} \in \ell^1$. We also equip the space $U^p(\mathbb{R};\mathcal{H})$ with the norm
\begin{align*}
\|u\|_{U^p}:=\inf\{\sum^{\infty}_{j=1}|\lambda_j|:u=\sum_{j=1}^{\infty}\lambda_j a_j,\,\lambda_j\in \mathbb{C},\, a_j \text{ are } U^p\textmd{-atoms}\}.
\end{align*}
\end{definition}

\begin{definition}[$V^p$-spaces]
We define the space $V^p(\mathbb{R},\mathcal{H})$ as the space of all functions $v:\mathbb{R} \rightarrow \mathcal{H}$ such that
\begin{align*}
\|v\|_{V^p}:=\sup\limits_{\{t_k\}^K_{k=0} \in \mathcal{Z}}(\sum_{k=1}^{K}\|v(t_k)-v(t_{k-1})\|^p_{\mathcal{H}})^{\frac{1}{p}} < +\infty,
\end{align*}
where we use the convention $v(\infty)=0$. Also, we denote by $V^p_{rc}(\mathbb{R},\mathcal{H})$ the closed subspace of $V^p(\mathbb{R},\mathcal{H})$ containing all right-continuous functions $v$ with $\lim\limits_{t\rightarrow -\infty}v(t)=0$.
\end{definition}
In our context we shall set the Hilbert space $\mathcal{H}$ to be the Sobolev space $H_x^s$ with $s\in\R$, which will be the case in the remaining parts of the paper.
\begin{definition}[$U_{\Delta}^p$- and $V_{\Delta}^p$-spaces in \cite{HadacHerrKoch2009}]
For $s\in \mathbb{R}$ we let $U^p_{\Delta}H_x^s(\R)$ resp. $V^p_{\Delta}H_x^s(\R)$ be the spaces of all functions such that $e^{-it\Delta}u(t)$ is in $U^p(\mathbb{R},H_x^s)$ resp. $V^p_{rc}(\mathbb{R},H_x^s)$, with norms
\begin{align*}
\|u\|_{U^p_{\Delta}H_x^s(\R)}=\|e^{-it\Delta}u\|_{U^p(\mathbb{R},H_x^s)}, \quad \|u\|_{V^p_{\Delta}H_x^s(\R)}=\|e^{-it\Delta}u\|_{V^p(\mathbb{R},H_x^s)}.
\end{align*}
\end{definition}

Having defined the $U_\Delta^p$- and $V_\Delta^p$-spaces we are now ready to formulate the function spaces for studying the Cauchy problem \eqref{eq: main}. For $C=[-\frac{1}{2},\frac{1}{2})^3 \in \mathbb{R}^3$ and $z\in \mathbb{R}^3$ let $C_z=z+C$ be the translated unit cube centered at $z$ and define the sharp projection operator $P_{C_z}$ by
\begin{align*}
\mathcal{F}(P_{C_z} f)(\xi)=\chi_{C_z}(\xi) \mathcal{F} (f)  (\xi),\quad\xi\in\Z^3,
\end{align*}
where $\chi_{C_z}$ is the characteristic function restrained on $C_z$. We then define the $X^s$- and $Y^s$-spaces as follows:

\begin{definition}[$X^s$- and $Y^s$-spaces]
For $s\in \mathbb{R}$ we define the $X^s(\R)$- and $Y^s(\R)$-spaces through the norms
\begin{align*}
\|u\|_{X^s(\mathbb{R})}^2&:=\sum_{z\in \mathbb{Z}^3} \langle z \rangle^{2s} \|P_{C_z} u\|_{U_{\Delta}^2(\mathbb{R};L_x^2)}^2,\\
\|u\|_{Y^s(\mathbb{R})}^2&:=\sum_{z\in \mathbb{Z}^3} \langle z \rangle^{2s} \|P_{C_z} u\|_{V_{\Delta}^2(\mathbb{R};L_x^2)}^2 .
\end{align*}
\end{definition}

For an interval $I \subset \mathbb{R}$ we also consider the restriction spaces $X^s(I),Y^s(I)$ etc. For these spaces we have the following useful embedding:

\begin{proposition}[Embedding between the function spaces, \cite{HadacHerrKoch2009}]
For $2< p< q<\infty$ we have
\begin{align*}
U^2_{\Delta}H_x^s \hookrightarrow X^s\hookrightarrow  Y^s \hookrightarrow V^2_{\Delta}H_x^s\hookrightarrow  U^p_{\Delta}H_x^s\hookrightarrow U^q_{\Delta}H_x^s \hookrightarrow L^{\infty}H_x^s.
\end{align*}
\end{proposition}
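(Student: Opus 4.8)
The proposition is the concatenation of six one-step inclusions, and my plan is to group them according to the mechanism involved. The three right-most inclusions $V^2_\Delta H^s_x \hookrightarrow U^p_\Delta H^s_x \hookrightarrow U^q_\Delta H^s_x \hookrightarrow L^\infty H^s_x$ are statements purely about the abstract $U^p$- and $V^p$-spaces and are exactly the general embeddings of \cite{HadacHerrKoch2009}; the three left-most inclusions $U^2_\Delta H^s_x \hookrightarrow X^s \hookrightarrow Y^s \hookrightarrow V^2_\Delta H^s_x$ additionally involve the sharp frequency-cube decomposition defining $X^s$ and $Y^s$. Since $e^{it\Delta}$ is unitary on every $H^s_x$, I would throughout pass to $e^{-it\Delta}u$ so that the propagator disappears and one argues directly with $U^p(\R;H^s_x)$- and $V^p(\R;H^s_x)$-functions.

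For the abstract group, the two inclusions $U^p_\Delta H^s_x \hookrightarrow U^q_\Delta H^s_x$ and $U^q_\Delta H^s_x \hookrightarrow L^\infty H^s_x$ are immediate from the atomic definition: for a $U^p$-atom with step values $\phi_k$ one has $\|\phi_k\|_{H^s_x}\le 1$ and $\sum_k\|\phi_k\|_{H^s_x}^p=1$, whence $\sum_k\|\phi_k\|_{H^s_x}^q\le\max_k\|\phi_k\|_{H^s_x}^{q-p}\le 1$, so the atom is also admissible at exponent $q$ and $\|u\|_{U^q}\le\|u\|_{U^p}$; likewise $\|\phi_k\|_{H^s_x}\le 1$ directly bounds the $L^\infty_t H^s_x$-norm of an atom, and summing over an atomic decomposition and taking the infimum closes both bounds. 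The genuinely substantial inclusion is $V^2_\Delta H^s_x \hookrightarrow U^p_\Delta H^s_x$ for $p>2$, which is the abstract embedding $V^2_{rc}\hookrightarrow U^p$; I expect this to be \emph{the main obstacle}, and rather than reprove it I would cite \cite{HadacHerrKoch2009}. Its proof decomposes a right-continuous $V^2$-function along the dyadic level sets of the sizes of its increments into a superposition of $U^p$-atoms, the summability of the atomic coefficients being precisely what forces the strict inequality $p>2$.

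The left-most group rests on a single book-keeping identity: on the cube $C_z$ one has $\langle\xi\rangle\sim\langle z\rangle$, so $\|P_{C_z}\psi\|_{H^s_x}\sim\langle z\rangle^{s}\|P_{C_z}\psi\|_{L^2_x}$, and by orthogonality of the sharp cut-offs $\|\psi\|_{H^s_x}^2\sim\sum_z\langle z\rangle^{2s}\|P_{C_z}\psi\|_{L^2_x}^2$; this converts the weights $\langle z\rangle^{2s}$ in the definitions of $X^s$ and $Y^s$ into the $H^s_x$-norm. For $U^2_\Delta H^s_x\hookrightarrow X^s$ I would test on a $U^2$-atom $a=\sum_k\chi_{[t_{k-1},t_k)}\phi_{k-1}$: since $P_{C_z}$ commutes with $e^{it\Delta}$ and $P_{C_z}a$ is again a step function, it equals a constant multiple of a $U^2$-atom, giving $\|P_{C_z}a\|_{U^2_\Delta L^2_x}^2\lesssim\sum_k\|P_{C_z}\phi_{k-1}\|_{L^2_x}^2$; summing against $\langle z\rangle^{2s}$ and using the identity yields $\|a\|_{X^s}^2\lesssim\sum_k\|\phi_{k-1}\|_{H^s_x}^2=1$, and the general bound follows by the triangle inequality over an atomic decomposition and the infimum. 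The inclusion $X^s\hookrightarrow Y^s$ is then just the cubewise abstract embedding $U^2\hookrightarrow V^2$ summed against the weights $\langle z\rangle^{2s}$.

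Finally, for $Y^s\hookrightarrow V^2_\Delta H^s_x$ I would fix a partition $\{t_k\}$ and, writing $v=e^{-it\Delta}u$, expand via the same identity $\sum_k\|v(t_k)-v(t_{k-1})\|_{H^s_x}^2\sim\sum_z\langle z\rangle^{2s}\sum_k\|P_{C_z}(v(t_k)-v(t_{k-1}))\|_{L^2_x}^2\le\sum_z\langle z\rangle^{2s}\|P_{C_z}v\|_{V^2_\Delta L^2_x}^2$, where for each fixed $z$ the inner sum over $k$ is bounded by the cubewise $V^2$-norm. As the right-hand side is independent of the partition, taking the supremum over all partitions gives $\|u\|_{V^2_\Delta H^s_x}\lesssim\|u\|_{Y^s}$. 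Chaining the six inclusions yields the proposition; apart from the imported embedding $V^2_{rc}\hookrightarrow U^p$, every step is elementary orthogonality and atomic book-keeping.
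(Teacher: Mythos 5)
The paper gives no proof of this proposition at all: it is quoted directly from \cite{HadacHerrKoch2009} (the same chain appears in \cite{HTT1}), so there is no in-paper argument to compare against line by line. Your reconstruction is correct and follows the standard route of the cited literature: splitting the chain into the abstract group $V^2_{\mathrm{rc}}\hookrightarrow U^p\hookrightarrow U^q\hookrightarrow L^\infty$ and the cube-decomposition group $U^2_\Delta H_x^s\hookrightarrow X^s\hookrightarrow Y^s\hookrightarrow V^2_\Delta H_x^s$ is exactly how \cite{HadacHerrKoch2009,HTT1} organize these embeddings, and you correctly isolate the only genuinely nontrivial step, $V^2_{\mathrm{rc}}\hookrightarrow U^p$ for $p>2$, which you import rather than reprove --- legitimate, since the paper itself imports the entire proposition. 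One simplification you miss: on $\T^3$ the frequencies lie in $\Z^3$ and each half-open unit cube $C_z$, $z\in\Z^3$, contains exactly one lattice point, so $P_{C_z}$ is the projection onto the single frequency $z$ and your bookkeeping identity $\sum_z\langle z\rangle^{2s}\|P_{C_z}\psi\|_{L^2_x}^2=\|\psi\|_{H^s_x}^2$ holds with equality, not merely up to constants. Two small points you gloss over but should record: (i) in the step $U^p\hookrightarrow U^q$ the rescaled step function is a \emph{multiple} $\lambda a'$ of a $U^q$-atom with $\lambda=\bigl(\sum_k\|\phi_k\|^q_{H^s_x}\bigr)^{1/q}\leq 1$, since the atom normalization is an equality rather than an inequality, and similarly your $U^2$-atom computation for $U^2_\Delta H^s_x\hookrightarrow X^s$ needs the triangle inequality over the atomic decomposition together with completeness of $X^s$ to pass to general $u$; (ii) for $Y^s\hookrightarrow V^2_\Delta H^s_x$ your partition estimate controls the $V^2$-norm, but membership in $V^2_\Delta H^s_x$ as defined in the paper (via $V^2_{\mathrm{rc}}$) also requires right-continuity in $H^s_x$ and vanishing as $t\to-\infty$; both follow from the corresponding cube-wise properties by dominated convergence, using $\langle z\rangle^{2s}\|P_{C_z}e^{-it\Delta}u\|_{V^2}^2$ as the summable dominating sequence. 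Neither point affects the soundness of your argument.
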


As usual, the proofs of the well-posed results rely on the contraction principle and thus a dual norm estimation for the Duhamel term is needed. In the periodic setting, the dual norm is given as the $N^s$-norm, which is defined as follows:

\begin{definition}[$N^s$-norm]
On a time slot $I$ we define the $N^s(I)$-norm for $s\in\R$ by
\begin{equation*}
\| h\|_{N^s(I)}=\|\int_{a}^{t} e^{i(t-s)\Delta} h(s) \, ds \|_{X^s(I)} .
\end{equation*}
\end{definition}

The following proposition sheds light on the duality of the spaces $N^1(I)$ and $Y^{-1}(I)$.

\begin{proposition}[Duality of $N^1(I)$ and $Y^{-1}(I)$ in \cite{HTT1}]\label{prop:dual}
The spaces $N^1(I)$ and $Y^{-1}(I)$ satisfy the following duality inequality
\begin{align*}
\|f\|_{N^1(I)} \lesssim \sup_{\|v\|_{Y^{-1}(I)}\leq 1} \int_{I \times  \mathbb{T}^3} f(t,x)\overline{v(t,x)} \, dxdt.
\end{align*}
Moreover, the following estimate holds for any smooth ($H_x^1$-valued) function $g$ on an interval $I=[a,b]$:
\begin{align*}
\|g\|_{X^1(I)}\lesssim \|g(a)\|_{H_x^1}+(\sum_N \|P_N(i\partial_t+\Delta)g\|^2_{L_t^1 H_x^1(I)})^{\frac{1}{2}}.
\end{align*}
\end{proposition}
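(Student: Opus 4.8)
The statement bundles two essentially independent facts from the atomic-space calculus, which I would establish separately. Throughout I would use repeatedly that the sharp cube projector $P_{C_z}$ commutes with $e^{it\Delta}$ and is bounded on $L_x^2$, hence on $U^2_\Delta L_x^2$ and $V^2_\Delta L_x^2$; together with the frequency-disjointness of the cubes $\{C_z\}_{z\in\Z^3}$ this is what lets the $X^1$- and $Y^{-1}$-norms split as $\ell^2$ sums over $z$. For the \emph{duality inequality} the plan is to unfold the $N^1$-norm and reduce to the abstract $U^2$--$V^2$ duality of Hadac--Herr--Koch, applied cube by cube. Setting $u(t)=\int_a^t e^{i(t-s)\Delta}f(s)\,ds$, so that $\|f\|_{N^1(I)}=\|u\|_{X^1(I)}$, the profile $w_z:=e^{-it\Delta}P_{C_z}u=\int_a^t e^{-is\Delta}P_{C_z}f(s)\,ds$ is absolutely continuous with $\partial_t w_z=e^{-it\Delta}P_{C_z}f$. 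The abstract duality then yields
\[
\|P_{C_z}u\|_{U^2_\Delta L_x^2}=\|w_z\|_{U^2}\lesssim \sup_{\|v_z\|_{V^2_\Delta L_x^2}\le 1}\ \Big|\int_I \inner{P_{C_z}f(t),\,v_z(t)}_{L_x^2}\,dt\Big|,
\]
where the spacetime integral arises after transferring the bilinear $U^2$--$V^2$ pairing of $\partial_t w_z$ against a $V^2$-function $\phi_z$ and writing $v_z:=e^{it\Delta}\phi_z$. Replacing $v_z$ by $P_{C_z}v_z$ (which only decreases the norm) I may assume $v_z$ is frequency-supported in $C_z$.

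For the reassembly, writing $a_z:=\|P_{C_z}u\|_{U^2_\Delta L_x^2}$ and choosing a near-optimal $v_z$ above, I would form the single test function
\[
v:=\frac{1}{\|u\|_{X^1(I)}}\sum_{z\in\Z^3}\langle z\rangle^{2}a_z\, v_z.
\]
The frequency-disjointness gives $\|v\|_{Y^{-1}(I)}^2=\|u\|_{X^1(I)}^{-2}\sum_z\langle z\rangle^{-2}\langle z\rangle^{4}a_z^2\|v_z\|_{V^2_\Delta L_x^2}^2\le 1$, while pairing against $f$ and using $\int_I\inner{f,v_z}=\int_I\inner{P_{C_z}f,v_z}\gtrsim a_z$ (after a harmless phase adjustment) reproduces $\sum_z\langle z\rangle^2 a_z^2/\|u\|_{X^1(I)}=\|u\|_{X^1(I)}=\|f\|_{N^1(I)}$, which is exactly the claimed inequality. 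The main obstacle is genuinely this abstract duality and its transference to the concrete Duhamel pairing: one must verify that the reconstructed $v$ lands in $Y^{-1}$ with norm $\le 1$, and this is precisely where the $\ell^2$-orthogonality of the cube pieces and the weight bookkeeping $\langle z\rangle^{2}\cdot\langle z\rangle^{-2}$ enter.

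The \emph{energy estimate} is more elementary and I would obtain it from Duhamel's formula. With $F:=(i\partial_t+\Delta)g$ one has $g(t)=e^{i(t-a)\Delta}g(a)-i\int_a^t e^{i(t-s)\Delta}F(s)\,ds$, so by the triangle inequality and the definition of $N^1$,
\[
\|g\|_{X^1(I)}\lesssim \|e^{i(t-a)\Delta}g(a)\|_{X^1(I)}+\|F\|_{N^1(I)}.
\]
The free term is controlled by $\|g(a)\|_{H_x^1}$ via the embedding $U^2_\Delta H_x^1\hookrightarrow X^1$, since the $U^2_\Delta H_x^1$-norm of a free evolution coincides with the $H_x^1$-norm of its datum. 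For the forcing term I would argue cube by cube: the profile of $P_{C_z}\int_a^t e^{i(t-s)\Delta}F\,ds$ is the time-integral of $e^{-is\Delta}P_{C_z}F$, hence lies in $U^1$ with $\|\cdot\|_{U^2}\le\|\cdot\|_{U^1}\lesssim\|P_{C_z}F\|_{L_t^1 L_x^2(I)}$. Summing the squares with weights $\langle z\rangle^2$, applying Minkowski's inequality to pull the $\ell^2_z$-sum inside the time integral, and recognizing $\sum_z\langle z\rangle^2\|P_{C_z}F(t)\|_{L_x^2}^2\sim\|F(t)\|_{H_x^1}^2$ gives $\|F\|_{N^1(I)}\lesssim\|F\|_{L^1_t H_x^1(I)}$; a final Minkowski step bounds this by $(\sum_N\|P_N(i\partial_t+\Delta)g\|^2_{L_t^1 H_x^1(I)})^{1/2}$, which completes the proof.
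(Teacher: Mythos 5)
The paper does not prove this proposition itself --- it quotes it from \cite{HTT1} --- so I compare your argument with the proof in that reference. Your first part (the duality inequality) is essentially that standard argument and is sound: unfold the $N^1$-norm via Duhamel, note $\partial_t\bigl(e^{-it\Delta}P_{C_z}u\bigr)=e^{-it\Delta}P_{C_z}f$, apply the abstract $U^2$--$V^2$ duality of \cite{HadacHerrKoch2009} cube by cube, and reassemble a single test function with weights $\langle z\rangle^{2}a_z$; the weight cancellation $\langle z\rangle^{-2}\langle z\rangle^{4}a_z^2$ and the frequency-disjointness of the cubes are exactly what make the reassembled $v$ admissible in $Y^{-1}$, modulo the routine phase adjustment and passage to finite sums.

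The second part, however, has a genuine gap at its final step. From Duhamel you correctly derive $\|g\|_{X^1(I)}\lesssim\|g(a)\|_{H^1_x}+\|F\|_{L^1_tH^1_x(I)}$ with $F=(i\partial_t+\Delta)g$, but you then claim that ``a final Minkowski step'' bounds $\|F\|_{L^1_tH^1_x}$ by $\bigl(\sum_N\|P_NF\|^2_{L^1_tH^1_x}\bigr)^{1/2}$. Minkowski's integral inequality gives precisely the reverse: the $\ell^2$-norm of the time integrals is at most the time integral of the $\ell^2$-norms, i.e.\ $\bigl(\sum_N\|P_NF\|^2_{L^1_tH^1_x}\bigr)^{1/2}\leq\|F\|_{L^1_tH^1_x}$. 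The discrepancy is real, not a misquotation: take $K$ dyadic frequencies $N_k$, disjoint time intervals $E_k$ of length $1/K$, and $F(t,x)=\sum_k\chi_{E_k}(t)f_{N_k}(x)$ with each $f_{N_k}$ frequency-localized at $N_k$ and $\|f_{N_k}\|_{H^1_x}=1$; then $\|F\|_{L^1_tH^1_x}=1$ while the square-sum equals $K^{-1/2}$. So the right-hand side of the proposition is genuinely smaller than $\|F\|_{L^1_tH^1_x}$, and your route through the unlocalized $L^1_tH^1_x$-norm cannot reach it. The repair --- which is how \cite{HTT1} argues --- is to perform the Littlewood--Paley decomposition \emph{before} Duhamel: apply your (correct) per-block estimate $\|P_Ng\|_{X^1(I)}\lesssim\|P_Ng(a)\|_{H^1_x}+\|P_NF\|_{L^1_tH^1_x(I)}$, where the forcing term is handled by the duality of part one together with $Y^{-1}\hookrightarrow L^\infty_tH^{-1}_x$, and only then sum the squares over $N$, using the almost-orthogonality $\|g\|^2_{X^1(I)}\sim\sum_N\|P_Ng\|^2_{X^1(I)}$, which holds because each unit cube $C_z$ meets only $O(1)$ dyadic annuli. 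This $\ell^2$-over-frequency structure of the $X^1$-norm is exactly what the sharper square-sum right-hand side encodes, and it is the ingredient your argument misses.
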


For our purpose we shall also need appeal to the $Z$-norm which is defined as follows:
\begin{definition}[$Z$-norm]
For a time slot $I$ we define the $Z(I)$-norm by
\begin{align*}
\|u\|_{Z(I)}=\sup\limits_{J\subset I,|J|\leq 1}(\sum_{N\geq 1}N^{3} \| P_N u\|_{L_{t, x}^4(J)}^4)^\frac{1}{4}.
\end{align*}
\end{definition}
As a direct consequence of the Strichartz estimates it is easy to verify that
\begin{align*}
\|u\|_{Z(I)} \lesssim \|u\|_{X^1(I)}.
\end{align*}
For those readers who are familiar with NLS on the standard Euclidean space $\R^d$, we also note that intuitively, the $X^1$- and $Z$-norms play exactly the same roles as the norm $\|\cdot\|_{S^1}:=\|\cdot\|_{L_t^\infty H_x^1\cap L_t^2 W_x^{1,6}}$ and $L^{10}_{t,x}$-norm for the quintic NLS on $\R^3$ respectively. Nevertheless, the $Z$-norm can not be directly applied to prove the well-posedness results. To that end, we introduce the $Z'$-norm defined by
$$ \|u\|_{Z'}:=\|u\|_{Z}^{\frac12}\|u\|_{X^1}^{\frac12}$$
which will be more useful for the proof of Theorem \ref{thm:lwp}.

\subsection{Conservation laws}
We end this section by introducing the mass $M(u)$ and energy $E(u)$ associating to the NLS flow \eqref{eq: main}:
\begin{align}\label{eq ME}
\begin{aligned}
M(u) &=\int_{\T^3}|u|^2 \, dx,\\
E(u) & =\int_{\T^3}\frac{1}{2}|\nabla u|^2+\frac{\mu_1}{4}|u|^4+\frac{\mu_2}{6}|u|^6  \,dx. 
\end{aligned}
\end{align}
It is well-known that both mass and energy are conserved over time along the NLS flow \eqref{eq: main}.

As a direct application of conservation laws and H\"older's inequality, we have the following uniform estimate of the kinetic energy $\|\nabla u\|_{L_t^\infty L_x^2(I \times \T^3)}^2 = : \|\nabla u\|_{L_t^\infty L_x^2(I) }^2 $ for a solution $u$ of \eqref{eq: main}. (As mentioned in Notations, we omit the space $\T^3$ for convenience.) We include the proof below for completeness (see the original argument in \cite[Sec. 2.2]{zhang2006cauchy}). 

\begin{lemma}\label{zhang uniform kinetic}
Let $u\in X^1(I)$ be a solution of \eqref{eq: main} with $u(0)=u_0$. Then 
\begin{align}
\|\nabla u\|_{L_t^\infty L_x^2(I)}^2\lesssim E(u_0)+M(u_0)^2 .
\end{align}
\end{lemma}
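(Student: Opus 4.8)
The plan is to rely entirely on the conservation of mass and energy together with the favorable sign of the quintic term. Since $u\in X^1(I)$ solves \eqref{eq: main}, both $M(u(t))=M(u_0)$ and $E(u(t))=E(u_0)$ hold for every $t\in I$. Solving the energy identity \eqref{eq ME} for the kinetic part gives, for each fixed $t$,
\[
\tfrac12\|\nabla u(t)\|_{L_x^2}^2 = E(u_0)-\tfrac{\mu_1}{4}\|u(t)\|_{L_x^4}^4-\tfrac{\mu_2}{6}\|u(t)\|_{L_x^6}^6 .
\]
The whole point is to bound the right-hand side uniformly in $t$ by $E(u_0)$ and $M(u_0)$; here the hypothesis $\mu_2>0$ (the regime of Theorem \ref{thm:gwp}) is essential, as it keeps the sextic term on the correct side.

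First I would dispose of the defocusing cubic case $\mu_1>0$: then both potential terms are nonnegative and the identity immediately yields $\tfrac12\|\nabla u(t)\|_{L_x^2}^2\le E(u_0)$, which is stronger than needed. The only substantive case is the focusing cubic one, $\mu_1<0$, where $-\tfrac{\mu_1}{4}\|u(t)\|_{L_x^4}^4=\tfrac{|\mu_1|}{4}\|u(t)\|_{L_x^4}^4>0$ must be controlled. The key step is to estimate the quartic term by interpolating between $L_x^2$ and $L_x^6$: by H\"older's inequality $\|u\|_{L_x^4}^4\le \|u\|_{L_x^2}\|u\|_{L_x^6}^3=M(u_0)^{1/2}\|u\|_{L_x^6}^3$, which trades the quartic quantity for a half power of the (sign-favorable) sextic one and a half power of the conserved mass.

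Inserting this into the identity and moving the sextic term to the left, I obtain
\[
\tfrac12\|\nabla u(t)\|_{L_x^2}^2+\tfrac{\mu_2}{6}\|u(t)\|_{L_x^6}^6\le E(u_0)+\tfrac{|\mu_1|}{4}M(u_0)^{1/2}\|u(t)\|_{L_x^6}^3 .
\]
I would then apply Young's inequality to the last term, writing $\tfrac{|\mu_1|}{4}M(u_0)^{1/2}\|u\|_{L_x^6}^3\le \tfrac{\mu_2}{12}\|u\|_{L_x^6}^6+\tfrac{3\mu_1^2}{16\mu_2}M(u_0)$, so that the first summand is absorbed by $\tfrac{\mu_2}{6}\|u\|_{L_x^6}^6$ on the left. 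Discarding the remaining nonnegative sextic term leaves $\tfrac12\|\nabla u(t)\|_{L_x^2}^2\le E(u_0)+C(\mu_1,\mu_2)M(u_0)$; taking the supremum over $t\in I$ and bounding the mass-dependence crudely by $M(u_0)^2$ yields the claimed estimate.

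The argument is elementary, and the only genuine obstacle is conceptual rather than technical: the cubic potential has an indefinite sign and, in the focusing case, could in principle drive the kinetic energy up. The mechanism that prevents this is precisely the repulsive quintic term, and the role of the scale-critical interpolation $\|u\|_{L_x^4}^4\le \|u\|_{L_x^2}\|u\|_{L_x^6}^3$ is to make the competition between the cubic and quintic terms quantitative, so that Young's inequality can absorb the former into the latter. Accordingly, the estimate genuinely uses $\mu_2>0$ and would fail for a focusing quintic potential, consistent with the possibility of blow-up in that regime.
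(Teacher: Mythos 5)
Your proof is correct and takes essentially the same route as the paper: the paper absorbs the focusing quartic term into the defocusing sextic one via the pointwise inequality $-\frac{\abs{\mu_1}}{4}\abs{u}^4+\frac{\mu_2}{6}\abs{u}^6\geq -C(\mu_1,\mu_2)\abs{u}^2$, which is precisely your H\"older-plus-Young step performed pointwise rather than at the integral level (the paper likewise splits into the cases $\mu_1>0$ and $\mu_1<0$ with $\mu_2>0$). One shared cosmetic point: both arguments really yield $\|\nabla u(t)\|_{L_x^2}^2\lesssim E(u_0)+M(u_0)$, and the passage to the stated bound $E(u_0)+M(u_0)^2$ is the same unexplained (and for the intended application harmless) leap that the paper itself makes, so nothing in your argument is missing relative to the paper's own proof.
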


\begin{proof}[Proof of Lemma \ref{zhang uniform kinetic}]
Recall the mass and energy defined in \eqref{eq ME}. If both $\mu_1$ and $\mu_2$ are positive, it is  easy to see that for any $t$
\begin{align}
\norm{\nabla u(t)}_{L_x^2}^2 \lesssim E .
\end{align}
If $\mu_1 <0$ and $\mu_2 >0$, then we use the following inequality for some $C(\mu_1 , \mu_2)$
\begin{align}
-\frac{\abs{\mu_1}}{4} \abs{u(t,x)}^4 + \frac{\abs{\mu_2}}{6} \abs{u(t,x)}^6 \geq - C(\mu_1 , \mu_2) \abs{u(t,x)}^2
\end{align}
to conclude that for any $t$
\begin{align}
\norm{\nabla u(t)}_{L_x^2}^2 \lesssim E + M^2 .
\end{align}
\end{proof}

\section{Proof of Theorem \ref{thm:lwp}}\label{sec LWP}
In this section we give the proof of Theorem \ref{thm:lwp}. As the precise value of $|I|=2T$ has only impact on the numerical constants, without loss of generality, we may also assume that $|I|\leq 1$ throughout this section.

We begin with recording a trilinear estimate deduced in \cite{IPT3}.

\begin{lemma}[Trilinear estimate, \cite{IPT3}]\label{lem Trilinear}
Suppose that $ u_i=P_{N_i}u$, for $i=1,2,3$ satisfying $N_1\geq N_2 \geq N_3\geq 1$. Then there exists some $\delta>0$ such that
\begin{equation}\label{equation}
\|u_1 u_2 u_3\|_{L_{t,x}^2(I)} \lesssim \left(\frac{N_3}{N_1}+\frac{1}{N_2}\right)^\delta \|u_1\|_{Y^0(I)} \|u_2\|_{Z'(I)} \|u_3\|_{Z'(I)} .
\end{equation}
\end{lemma}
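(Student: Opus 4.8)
The plan is to prove the trilinear estimate \eqref{equation} by a frequency-localized, case-by-case argument based on the relative sizes of the frequencies $N_1\geq N_2\geq N_3$, combining a bilinear Strichartz-type estimate (which supplies the crucial gain factor $(N_3/N_1+1/N_2)^\delta$) with the $Z'$-norm control on the lower-frequency pieces. The starting point is the observation that $\|u_1u_2u_3\|_{L^2_{t,x}}$ can be handled by first applying Cauchy--Schwarz to split off the highest-frequency factor: I would estimate $\|u_1u_2u_3\|_{L^2_{t,x}}\leq \|u_1 u_3\|_{L^2_{t,x}}\,\|u_2\|_{L^\infty_{t,x}}$ or, more profitably, pair the highest frequency $u_1$ with the lowest $u_3$ in a bilinear estimate so that the frequency disparity produces the decay. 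The heuristic is that the $Y^0$-norm on $u_1$ measures the high-frequency (unweighted $L^2$) energy, while the two $Z'$-factors carry $H^1$-level information through the definition $\|u\|_{Z'}=\|u\|_Z^{1/2}\|u\|_{X^1}^{1/2}$, and the $Z$-norm is exactly the $L^4_{t,x}$-based norm adapted to the scaling of the quintic problem.

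The key technical device I expect to need is a bilinear frequency-localized estimate of the schematic form
\begin{align*}
\|(P_{N_1}u)(P_{N_3}w)\|_{L^2_{t,x}(I)}\lesssim \Bigl(\tfrac{N_3}{N_1}+\tfrac{1}{N_1}\Bigr)^{\delta}\,\|P_{N_1}u\|_{Y^0(I)}\,\|P_{N_3}w\|_{Y^0(I)},
\end{align*}
which is the $\T^3$-analogue of the Bourgain--Demeter / Herr--Tataru--Tzvetkov bilinear improvement over the diagonal $L^4_{t,x}$ estimate. Because such bilinear estimates are first proved for the linear propagator $e^{it\Delta}$ using the Strichartz estimate \eqref{strichartz} together with a counting/transversality argument on the frequency supports, I would establish the linear version on short time intervals $|J|\leq 1$ and then transfer it to the $U^2_\Delta$-level by the atomic structure (every $U^2_\Delta$ function is a superposition of modulated linear solutions), finally upgrading $U^2_\Delta\hookleftarrow Y^0$ via the embedding $Y^0\hookrightarrow V^2_\Delta\hookrightarrow U^p_\Delta$ and interpolation. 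The gain $(N_3/N_1+1/N_2)^\delta$ arises geometrically: when $N_1\gg N_3$ the two frequency cubes are well-separated so the output frequency localizes and one gains $N_3/N_1$; when $N_1\sim N_2$ are comparable one instead exploits $1/N_2$ via the smallness of the middle-to-top ratio in the transversality estimate.

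After securing the bilinear estimate, the remaining work is bookkeeping: I would insert the Littlewood-Paley pieces, use the bilinear bound on the pair $(u_1,u_3)$, and absorb $u_2$ in an $L^\infty_{t,x}$- or $Z$-type norm, then convert $Y^0$ and $Z$ norms into the stated $Y^0$ and $Z'$ norms using $\|u\|_{Z'}=\|u\|_Z^{1/2}\|u\|_{X^1}^{1/2}$ and the chain of embeddings $X^1\hookrightarrow Y^0$ (after removing the weight) recorded earlier. The factor $\delta>0$ then survives the final summation. I expect the main obstacle to be proving the sharp bilinear estimate with the explicit gain on the torus: unlike $\R^3$, where transversality and scaling give the bilinear improvement cleanly, on $\T^3$ one must run a lattice point counting argument compatible with the atomic $U^p$/$V^p$ framework, and care is required to ensure the $\delta$-power gain is uniform over all dyadic configurations and over time intervals of length $\leq 1$. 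A secondary subtlety is correctly distributing the two factors of the $Z'$-splitting so that exactly the right mix of $Z$- and $X^1$-information lands on $u_2$ and $u_3$; this is where matching the scaling exponent $3$ in the $Z$-norm against the Strichartz exponent in \eqref{strichartz} must be done carefully.
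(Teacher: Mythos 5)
Your proposal has a genuine gap, and it is located exactly at the step you yourself identify as ``the key technical device.'' The schematic bilinear estimate you propose,
\begin{equation}
\|(P_{N_1}u)(P_{N_3}w)\|_{L^2_{t,x}(I)}\lesssim \Bigl(\tfrac{N_3}{N_1}+\tfrac{1}{N_1}\Bigr)^{\delta}\,\|P_{N_1}u\|_{Y^0(I)}\,\|P_{N_3}w\|_{Y^0(I)},
\end{equation}
is false: it is dimensionally inconsistent, because both factors on the right are measured at $L^2$ regularity while the gain factor is dimensionless. Take $N_1=N_3=N$ and $u=w=e^{it\Delta}P_N f$ with $f$ the full frequency box (all Fourier coefficients equal to $1$ for $|\xi|\leq N$). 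Then the left-hand side equals $\|e^{it\Delta}P_Nf\|^2_{L^4_{t,x}}\sim N^{1/2}\|f\|^2_{L^2_x}$ (the exponent $N^{3/2-5/p}$ in \eqref{strichartz} is sharp at $p=4$ for this data), while your right-hand side is $\sim\|f\|^2_{L^2_x}$, so the estimate fails by $N^{1/2}$. Any true bilinear estimate must carry a full derivative on the low-frequency factor; this is precisely why the paper's own bilinear estimate \eqref{est:bilinear} in Lemma \ref{lem:bilinear} places the second factor in $Z'$ (equivalently $Y^1$ in the intermediate step), i.e.\ at $H^1$ level, and why the trilinear estimate puts $Z'$-norms (not $Y^0$-norms) on $u_2$ and $u_3$. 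A second, independent flaw is your reduction of the trilinear estimate to the bilinear one by placing $u_2$ in $L^\infty_{t,x}$: in three dimensions $\|u_2\|_{L^\infty_{t,x}}$ is not controlled by $\|u_2\|_{X^1}$ or $\|u_2\|_{Z'}$; Bernstein costs an extra factor $N_2^{1/2}$, which cannot be recovered from the dimensionless gain.

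For comparison, note that the paper does not prove this lemma at all: it imports it verbatim from \cite{IPT3}, whose proof (in the spirit of \cite{HTT1}, and mirrored in this paper's proof of Lemma \ref{lem:bilinear}) runs differently from your plan. There, one decomposes the \emph{highest}-frequency factor $u_1$ into sharp cubes $C$ of side length $N_2$, uses almost orthogonality of the outputs to write $\|u_1u_2u_3\|^2_{L^2_{t,x}}\lesssim \sum_C\|(P_Cu_1)u_2u_3\|^2_{L^2_{t,x}}$, estimates each piece by H\"older with all three factors in \emph{finite} Lebesgue exponents followed by the frequency-localized Strichartz estimate \eqref{strichartz} (the gain $(N_3/N_1+1/N_2)^\delta$ coming from the cube decomposition, the frequency counting, and the $\epsilon$-room above $p=10/3$), and only at the end brings in the $Z'$-norms by interpolating a $Y$-norm bound against an $L^4_{t,x}$-based $Z$-norm bound via $\|u\|_{Z'}=\|u\|_Z^{1/2}\|u\|_{X^1}^{1/2}$. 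Your broad circle of ideas (cube decomposition, $U^2$/$V^2$ transference, interpolation into $Z'$) is the right one, but as written the central lemma is false and the trilinear-to-bilinear reduction would not close.
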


For dealing with the cubic term, we also need the following bilinear estimate.

\begin{lemma}[Bilinear estimate]\label{lem:bilinear}
Suppose that $u_i=P_{N_i}u$, for $i=1,2$ satisfying $N_1\geq N_2\geq 1$. Then there exists some $\kappa>0$ such that
\begin{equation}\label{est:bilinear}
    \|u_1u_2\|_{L^2_{t,x}(I)}\lesssim \left(\frac{N_2}{N_1}+\frac{1}{N_2}\right)^\kappa|I|^{\frac{1}{20}}\|u_1\|_{Y^0(I)} \|u_2\|_{Z'(I)}.
\end{equation}
\end{lemma}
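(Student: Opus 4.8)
The plan is to prove the bilinear estimate \eqref{est:bilinear} by mimicking the structure of the trilinear estimate in Lemma \ref{lem Trilinear}, which I may assume, but exploiting that the product of only two frequency-localized pieces leaves one free factor that I can absorb into a short-time gain. The key heuristic is that the factor $|I|^{1/20}$ is exactly what one expects from interpolating a Strichartz estimate at an exponent strictly above the critical $L^4_{t,x}$ against the trivial bound, since on a short interval $J$ with $|J|\leq 1$ one can trade a small power of $|J|$ for a small loss in the Lebesgue exponent via H\"older in time.

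First I would reduce to the case of a single time slot $J\subseteq I$ with $|J|\leq 1$, since the right-hand side involves the $Z'(I)$-norm, which is built from such short windows, and the $Y^0$-norm is only strengthened by localization. On such a $J$ the plan is to estimate $\|u_1 u_2\|_{L^2_{t,x}(J)}$ by H\"older in space-time, writing $L^2_{t,x}$ as a product of two Lebesgue norms adapted to the two frequency-localized factors. The natural split is to put $u_1$, the high-frequency factor, into a space controlled by the $Y^0$-norm through the embedding $Y^0 \hookrightarrow V^2_\Delta L^2_x \hookrightarrow U^p_\Delta L^2_x$ from the embedding proposition, and to put $u_2$, the low-frequency factor, into an $L^p_{t,x}$-type norm controllable by the $Z$- and hence $Z'$-norm via the frequency-localized Strichartz estimate \eqref{strichartz}. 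The frequency gain $\left(\frac{N_2}{N_1}+\frac{1}{N_2}\right)^\kappa$ should come, as in the trilinear case, from a transversality/orthogonality consideration: when $N_1\gg N_2$ the two frequency supports separate and one gains in the ratio $N_2/N_1$, while the additive $1/N_2$ term handles the diagonal regime $N_1\sim N_2$ where one instead gains from the low-frequency factor itself being above scale one. In practice I would split into the two regimes $N_1\sim N_2$ and $N_1\gg N_2$ and treat them separately, using a genuine bilinear Strichartz (transversality) estimate in the separated regime.

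The short-time factor $|I|^{1/20}$ I would extract by choosing the Strichartz exponent $p$ strictly larger than the endpoint. The Bourgain--Demeter estimate \eqref{strichartz} holds for all $p>\tfrac{10}{3}$; by picking $p=4+\varepsilon$ rather than $p=4$ and applying H\"older in time on $J$ to pass from $L^4_t$ to $L^{4+\varepsilon}_t$, one picks up a positive power $|J|^{\theta}$ with $\theta\to 0$ as $\varepsilon\to 0$. Tuning $\varepsilon$ (equivalently $p$) so that $\theta=\tfrac{1}{20}$, and checking that the associated Strichartz loss $N^{3/2-5/p}$ remains summable against the dyadic frequencies after absorbing it into the $Z'$-norm's weights, should deliver precisely the stated factor. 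I expect the bookkeeping here --- verifying that the spatial and temporal exponents match up so that H\"older closes and the dyadic sums in $N_1,N_2$ converge --- to be the most delicate part.

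The main obstacle will be the orthogonality argument giving the frequency gain in the separated regime $N_1\gg N_2$. Unlike on $\R^3$, where the bilinear Strichartz estimate of the Bourgain type follows cleanly from the transversality of paraboloid pieces, on $\T^3$ one must work with the discrete lattice $\Z^3$ and the atomic $U^2/V^2$ spaces, so the gain has to be extracted at the level of $U^2$-atoms and then transferred to the $Y^0$-norm by the atomic decomposition. Concretely, I would prove the estimate first for $u_1$ a single $U^2_\Delta$-atom (a sum of free evolutions), where one can directly count lattice points in the intersection of the two shifted paraboloid caps to produce the $(N_2/N_1)^\kappa$ factor, and then pass to general $u_1\in Y^0$ by linearity and the embedding $Y^0\hookrightarrow U^p_\Delta$. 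Making this counting argument compatible with the $Z'$-norm on the $u_2$ factor, while preserving the clean product structure of the right-hand side, is where the real care is needed; the remaining steps are essentially the same Strichartz-and-H\"older manipulations already used in the proof of Lemma \ref{lem Trilinear}.
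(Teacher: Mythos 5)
Your toolbox is the right one (frequency-localized Strichartz, the $U^p/V^p$ embeddings, H\"older in time above the critical exponent), but the proposal misses the single idea the proof actually turns on, and replaces it with a harder, unnecessary one. The paper's proof covers the frequency support of the \emph{high}-frequency factor $u_1$ by cubes $C$ of side length $N_2$, applies the Strichartz estimate \eqref{strichartz} to each piece $P_C u_1$ --- so that every Strichartz loss is a power of $N_2$, never of $N_1$ --- and then resums via the almost orthogonality $\|u_1u_2\|_{L^2_{t,x}}^2\lesssim \sum_C\|(P_Cu_1)u_2\|_{L^2_{t,x}}^2$ together with the square-summability over cubes built into the $Y^0$-norm. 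In your split, $u_1$ is estimated through $Y^0\hookrightarrow U^p_\Delta$ and Strichartz at its own frequency, which costs $N_1^{\frac32-\frac5p}$; even with $p$ close to $\frac{10}{3}$ this leaves a loss $N_1^{\varepsilon}$, and since all the gain available on the right-hand side of \eqref{est:bilinear} is in powers of $N_2$ (note $\bigl(\tfrac{N_2}{N_1}+\tfrac{1}{N_2}\bigr)^\kappa\geq N_2^{-\kappa}$, while $N_1/N_2$ is unbounded), no choice of exponents closes. The cube decomposition at scale $N_2$ is exactly what removes this obstruction, and it is elementary. Relatedly, your fallback for the regime $N_1\gg N_2$ --- a genuine bilinear transversality estimate proved by lattice-point counting on $U^2$-atoms and then transferred to $Y^0$ --- is not needed at all: because of the additive $\tfrac{1}{N_2}$, a \emph{uniform} gain $N_2^{-1/2}$ suffices in every regime, including $N_1\gg N_2$, and that gain comes for free from the cube trick, since $\|(P_Cu_1)u_2\|_{L^2_{t,x}}\lesssim N_2^{1/2}\|P_Cu_1\|_{U^4_\Delta L^2_x}\|u_2\|_{U^4_\Delta L^2_x}$ and $\|u_2\|_{Y^0}\lesssim N_2^{-1}\|u_2\|_{Y^1}$. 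The failure of the clean Euclidean transversality gain on tori is precisely why the literature carries the $+\tfrac{1}{N_2}$ correction; your plan would be fighting the hard part of a problem you do not have to solve.

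The second gap is structural: the factor $|I|^{1/20}$ and the $Z'$-norm cannot be produced by tuning a single Strichartz exponent, because $\|u\|_{Z'}=\|u\|_{Z}^{1/2}\|u\|_{X^1}^{1/2}$ is by definition a geometric mean. The paper proves \emph{two} estimates and interpolates: first $\|u_1u_2\|_{L^2_{t,x}(I)}\lesssim |I|^{1/10}\|u_1\|_{Y^0(I)}\|u_2\|_{Z(I)}$, obtained by H\"older $L^4\times L^4$, then H\"older in time on the $P_Cu_1$ factor up to $L^{20/3}_{t,x}$ (this is where $|I|^{1/10}$ appears), Strichartz at scale $N_2$, and the observation $N_2^{3/4}\|u_2\|_{L^4_{t,x}}\leq \|u_2\|_{Z}$; second, $\|u_1u_2\|_{L^2_{t,x}(I)}\lesssim N_2^{-1/2}\|u_1\|_{Y^0(I)}\|u_2\|_{Y^1(I)}$ as above. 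The geometric mean of the two, with $X^1\hookrightarrow Y^1$, yields exactly $|I|^{1/20}$, the frequency factor, and the $Z'$-norm. A single-pass H\"older argument cannot do this: if $u_2$ is to be measured in $Z$ (which you must do --- a bound by $\|u_2\|_{X^1}$ alone is weaker than the claim and useless for the small-data argument), then $u_2$ is pinned to $L^4_{t,x}$, forcing $u_1$ into $L^4_{t,x}$ as well and leaving no slack to raise the time exponent; one then gets the frequency gain or the $|I|$-factor, but not both. So the two-estimate interpolation structure is essentially forced by the statement, and it is the part of the argument your proposal does not contain.
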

\begin{proof}[Proof of Lemma \ref{lem:bilinear}]
For any cube $C$ centered at $\xi\in \mathbb{Z}^3$ of size $N_2$, using H\"older's inequality and the Strichartz estimate \eqref{strichartz}, we have
\[
\begin{aligned}
\|(P_C u_1) u_2\|_{L^2_{t,x}(I)}&\lesssim \|P_C u_1\|_{L^4_{t,x}(I)}  \|u_2\|_{L^4_{t,x}(I)}\lesssim |I|^\frac{1}{10} \|P_C u_1\|_{L^{\frac{20}{3}}_{t,x}(I)}  \|u_2\|_{L^4_{t,x}(I)}\\
&\lesssim |I|^\frac{1}{10} \|P_C u_1\|_{U_\Delta^{\frac{20}{3}}L_x^2(I)}  \left(N_2^{\frac{3}{4}}\|u_2\|_{L^4_{t,x}(I)}\right)\lesssim
|I|^\frac{1}{10} \|P_C u_1\|_{Y^0(I)}  \left(N_2^{\frac{3}{4}}\|u_2\|_{L^4_{t,x}(I)}\right).
\end{aligned}\]
Using the orthogonality and summability properties of $Y^0(I)$ and the definition of $Z(I)$, the above estimate provides
\begin{equation}\label{eq:dagger}
    \begin{aligned}
    \|u_1 u_2\|^2_{L^2_{t,x}(I)}&\lesssim |I|^\frac{1}{5} \sum_{C} \|P_{C} u_1\|^2_{Y^0(I)}  \left(N_2^{\frac{3}{4}}\|u_2\|_{L^4_{t,x}(I)}\right)^2\lesssim |I|^\frac{1}{5} \|u_1\|^2_{Y^0(I)}  \|u_2\|^2_{Z(I)},
    \end{aligned}
\end{equation}
where the sum is over all $\xi\in N_2^{-1}\Z^3$.
It remains to prove
\begin{equation}\label{eq:ddagger}
    \begin{aligned}
   \|u_1u_2\|_{L^2_{t,x}(I)}\lesssim \left(\frac{N_2}{N_1}+\frac{1}{N_2}\right)^{\kappa_0} \|u_1\|_{Y^0(I)} \|u_2\|_{Y^1(I)}
    \end{aligned}
\end{equation}
for some $\kappa_0>0$, the desired claim follows then from interpolating \eqref{eq:dagger} and \eqref{eq:ddagger} and the embedding $X^1\hookrightarrow Y^1$. Again, using the orthogonality and summability properties of $Y^0(I)$ and Strichartz estimate \eqref{strichartz}, we obtain that
\[
\begin{aligned}
   \|u_1 u_2\|^2_{L^2_{t,x}(I)} &\lesssim \sum_C \|(P_C u_1) u_2\|^2_{L^2_{t,x}(I)}\lesssim \sum_C \left( N_2^{\frac{1}{2}} \|P_C u_1\|_{U^4_\Delta L_x^2(I)}  \|u_2\|_{U^4_\Delta L_x^2(I)} \right)^2\\
   &\lesssim \sum_{C} \left(
  N_2^{-\frac{1}{2}}  \|P_C u_1\|_{Y^0(I)} \|u_2\|_{Y^1(I)} \right)^2\lesssim N_2^{-1}  \|u_1\|^2_{Y^0(I)}  \|u_2\|^2_{Y^1(I)},
\end{aligned}
\]
as desired.
\end{proof}

As a direct consequence of the multilinear estimates deduced from Lemmas \ref{lem Trilinear} and \ref{lem:bilinear}, we immediately obtain the following nonlinear estimates.
\begin{lemma}[Nonlinear estimates]\label{lem Nonlinear}
For $u_k \in X^1(I)$, $k=1,2,3,4,5$, the following estimates
\begin{align}
\Big\|\prod_{i=1}^{5} \widetilde{u}_i\Big\|_{N^1(I)}&\lesssim \sum_{\{i_1,...i_5\}=\{1,2,3,4,5\} }\|u_{i_1}\|_{X^1(I)} \cdot \prod_{i_k \neq i_1} \|u_{i_k}\|_{Z'(I)},\label{est:nonlinear1}\\
\Big\|\prod_{i=1}^{3} \widetilde{u}_i\Big\|_{N^1(I)}&\lesssim|I|^{\frac{1}{20}} \sum_{\{ i_1,i_2,i_3 \}=\{ 1,2,3 \} }\|u_{i_1}\|_{X^1(I)} \cdot \prod_{i_k \neq i_1} \|u_{i_k}\|_{Z'(I)}\label{est:nonlinear2}
\end{align}
hold true, where $\widetilde{u} \in \{ u, \bar{u} \}$.
\end{lemma}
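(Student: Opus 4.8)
The plan is to derive both nonlinear estimates \eqref{est:nonlinear1} and \eqref{est:nonlinear2} from the duality inequality in Proposition \ref{prop:dual} together with the multilinear $L^2_{t,x}$-bounds of Lemmas \ref{lem Trilinear} and \ref{lem:bilinear}. First I would use the duality of $N^1(I)$ and $Y^{-1}(I)$ to reduce each estimate to controlling an integral of the form $\int_{I\times\T^3}\big(\prod_i\widetilde{u}_i\big)\overline{v}\,dx\,dt$ over all $v$ with $\|v\|_{Y^{-1}(I)}\le 1$. Since the complex conjugates $\widetilde{u}_i\in\{u_i,\bar u_i\}$ do not affect any of the moduli-based estimates, I would suppress them notationally throughout.

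For the quintic estimate \eqref{est:nonlinear1}, I would perform a Littlewood--Paley decomposition of each of the five factors $u_i$ and of $v$, writing $u_i=\sum_{N_i}P_{N_i}u_i$ and $v=\sum_{N_0}P_{N_0}v$. By frequency support considerations the largest two frequencies must be comparable, so I would order the six dyadic frequencies and group the factors as a product of two triples: apply a Cauchy--Schwarz splitting $\big|\int (\prod_{i=1}^5 u_i)\bar v\big|\le \|u_{j_1}u_{j_2}u_{j_3}\|_{L^2_{t,x}}\,\|u_{j_4}u_{j_5}v\|_{L^2_{t,x}}$, choosing the grouping so that within each triple the frequencies are in nonincreasing order. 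Then Lemma \ref{lem Trilinear} bounds each $L^2_{t,x}$-norm by the product of one $Y^0$-norm (placed on the highest-frequency factor, which after summation is controlled by $X^1\hookrightarrow Y^0$ or by $\|v\|_{Y^{-1}}$ for the factor carrying $v$) and two $Z'$-norms. The gain factor $\big(\frac{N_3}{N_1}+\frac1{N_2}\big)^\delta$ supplies the off-diagonal decay needed to sum the resulting geometric-type series in the dyadic frequencies; I would carry out this dyadic summation carefully, absorbing the $\langle z\rangle$-weights of the $X^1$- and $Y^{-1}$-norms, to arrive at the claimed bound symmetrized over which factor receives the $X^1$-norm.

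For the cubic estimate \eqref{est:nonlinear2}, the strategy is parallel but simpler: after duality I would control $\big|\int (\prod_{i=1}^3 u_i)\bar v\big|$ by pairing the frequencies into two bilinear $L^2_{t,x}$-products and invoking Lemma \ref{lem:bilinear} twice, which now produces the explicit factor $|I|^{\frac1{20}}$ from each bilinear bound (retaining at least one such factor after the summation) together with the summable gain $\big(\frac{N_2}{N_1}+\frac1{N_2}\big)^\kappa$. The small-interval factor $|I|^{\frac1{20}}$ is exactly what makes the cubic term perturbatively negligible on short intervals, which is the feature exploited later in Section \ref{sec GWP}.

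The main obstacle I anticipate is the bookkeeping of the dyadic summation rather than any single analytic inequality: one must verify that after placing the highest frequency into the $Y^0$- (or $Y^{-1}$-)slot and the remaining frequencies into $Z'$-slots, the product of the off-diagonal gains $\big(\frac{N_3}{N_1}+\frac1{N_2}\big)^\delta$ together with the frequency weights coming from the definitions of $X^1$, $Z'$ and $Y^{-1}$ yields a genuinely summable expression over all dyadic tuples, uniformly in the number of factors. This requires a careful case analysis according to which frequency is the largest and the correct distribution of the $\langle z\rangle^{\pm1}$ weights so that the $\ell^2$-type summability built into the $X^s$- and $Y^s$-norms can be applied via Cauchy--Schwarz. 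Once this summation scheme is set up for the quintic case, the cubic case follows by the same mechanism with one fewer factor.
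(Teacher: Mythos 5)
Your proposal is correct and follows essentially the same route as the paper: the paper's own ``proof'' simply defers to \cite[Lem.~3.2]{IPT3} and \cite[Lem.~3.2]{IPRT3}, and those arguments are exactly what you outline --- duality via Proposition \ref{prop:dual}, Littlewood--Paley decomposition with the two highest frequencies comparable, a Cauchy--Schwarz splitting into trilinear (resp.\ bilinear) $L^2_{t,x}$ pieces estimated by Lemma \ref{lem Trilinear} (resp.\ Lemma \ref{lem:bilinear}), and dyadic summation powered by the off-diagonal gains, with the $|I|^{1/20}$ factor in \eqref{est:nonlinear2} inherited from the bilinear bound. The only difference is that you reconstruct the bookkeeping the paper omits by citation; your sketch of it is consistent with the cited proofs.
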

\begin{proof}[Proof of  Lemma \ref{lem Nonlinear}]
\eqref{est:nonlinear1} and \eqref{est:nonlinear2} can be proved, words by words, by using the arguments from \cite[Lem. 3.2]{IPT3} and \cite[Lem. 3.2]{IPRT3}, respectively, which make use of Lemma \ref{lem Trilinear} as well as Lemma \ref{lem:bilinear}. We thus omit the repeating arguments.
\end{proof}

Having all the preliminaries we are in a position to prove Theorem \ref{thm:lwp}.

\begin{proof}[Proof of Theorem \ref{thm:lwp}]
We define the contraction mapping
\begin{align*}
\Phi(u)&:= e^{it\Delta}u_0-i\mu_1\int_0^t e^{i(t-s)\Delta}|u|^2 u \, ds
-i\mu_2\int_0^t e^{i(t-s)\Delta}|u|^4 u \, ds.
\end{align*}
We aim to show that by choosing $\delta_0$ sufficiently small, the mapping $\Phi$ defines a contraction on the metric space
$$ \mathcal{S}:=\{u\in X^1(I): \|u\|_{X^1(I)}\leq 2CE,\,\|u\|_{Z'(I)}\leq 2\delta\},$$
where $C\geq 1$ is some universal constant. The space $\mathcal{S}$ is particularly a complete metric space equipping with the metric $\rho(u,v)
:=\|u-v\|_{X^1(I)}$. First we show that for $\delta$ small we have $\Phi(\mathcal{S})\subset \mathcal{S}$. Indeed, using Lemma \ref{lem Nonlinear} we obtain
\begin{align*}
\|\Phi(u)\|_{X^1(I)}&\leq \|e^{it\Delta}u_0\|_{X^1(I)}+C\|u\|_{X^1(I)}\|u\|^2_{Z'(I)}+C\|u\|_{X^1(I)}\|u\|^4_{Z'(I)}\nonumber\\
&\leq C\|u_0\|_{H_x^1}+C(2CE)(2C\delta)^2+C(2CE)(2C\delta)^4\nonumber\\
&\leq CE(1+(2C)^3\delta^2+(2C)^5\delta^4)\leq 2CE,\\
\nonumber\\
\|\Phi(u)\|_{Z'(I)}&\leq \|e^{it\Delta}u_0\|_{Z'(I)}+C\|u\|_{X^1(I)}\|u\|^2_{Z'(I)}+C\|u\|_{X^1(I)}\|u\|^4_{Z'(I)}\nonumber\\
&\leq \delta+C(2CE)(2C\delta)^2+C(2CE)(2C\delta)^4\leq 2\delta
\end{align*}
by choosing $\delta$ sufficiently small. 

It is left to show that $\Phi$ is a contraction for small $\delta$. Again, using Lemma \ref{lem Nonlinear} we obtain
\begin{align*}
\|\Phi(u)-\Phi(v)\|_{X^1(I)}&\leq C(\|u\|_{X^1(I)}+\|v\|_{X^1(J)})(\|u\|_{Z'(I)}+\|v\|_{Z'(I)})\|u-v\|_{X^1(I)}\nonumber\\
& \quad +C(\|u\|_{X^1(I)}+\|v\|_{X^1(J)})(\|u\|_{Z'(I)}+\|v\|_{Z'(I)})^3\|u-v\|_{X^1(I)}\nonumber\\
&\leq C(4CE)(4C\delta+(4C\delta)^3)\|u-v\|_{X^1(I)}\leq\frac{1}{2}\|u-v\|_{X^1(I)}
\end{align*}
by choosing $\delta$ small. This completes the proof of Theorem \ref{thm:lwp}.
\end{proof}

\section{Proof of Theorem \ref{thm:gwp}}\label{sec GWP}
In this section we prove Theorem \ref{thm:gwp}. Again, without loss of generality, we may assume that $|I|\leq 1$ and $\mu_2=1$. The goal is therefore to show that \eqref{eq: main} is well-posed on $I$ without imposing the smallness condition \eqref{smallness assumption}. We firstly introduce the following large data Black-Box-Theory for defocusing quintic NLS on $\T^3$ from \cite{IPT3}.
\begin{theorem}[GWP of the defocusing quintic NLS on $\T^3$, \cite{IPT3}]\label{thm:black box}
Consider the defocusing quintic NLS
\begin{align}\label{quintic nls t3}
(i\partial_t+\Delta)v=|v|^4 v
\end{align}
on $I=(-T,T)$ with $|I|\leq 1$. Then for any $v_0\in H_x^1$, \eqref{quintic nls t3} possesses a unique solution $v\in X^1(I)$ with $v(0)=v_0$. Moreover, we have
\begin{align}\label{global control quintic}
\|v\|_{X^1(I)}+\|v\|_{Z(I)}\leq C(M(v_0),E(v_0))<\infty.
\end{align}
\end{theorem}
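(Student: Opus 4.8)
\textbf{Proof proposal for Theorem \ref{thm:black box}.}

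The plan is to follow the concentration-compactness/rigidity program of Kenig--Merle, adapted to the compact setting of $\T^3$, since this is exactly the large-data global theory for an energy-critical equation. Because $\T^3$ carries no exact scaling symmetry, the natural object to control is the maximal spacetime bound: for $L\ge 0$ I would set $\Lambda(L):=\sup\|u\|_{Z(I)}$, where the supremum runs over all solutions of \eqref{quintic nls t3} on intervals $I$ with $|I|\le 1$ and $M(u)+E(u)\le L$. The local theory in the critical spaces $X^1$, $Z$ --- constructed exactly as in Theorem \ref{thm:lwp} with $\mu_1=0$, using the trilinear estimate of Lemma \ref{lem Trilinear} --- together with a stability (perturbation) lemma, shows that a finite bound $\Lambda(L)<\infty$ upgrades to the full conclusion \eqref{global control quintic}, with uniqueness coming from the contraction argument. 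Thus it suffices to prove $\Lambda(L)<\infty$ for every $L$. First I would argue by contradiction: let $L_c$ be the threshold energy at which $\Lambda$ first becomes infinite, assume $L_c<\infty$, and extract solutions $u_n$ with $E(u_n)+M(u_n)\to L_c$ and $\|u_n\|_{Z(I_n)}\to\infty$.

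The heart of the argument is a profile decomposition for the initial data $\{u_n(0)\}$ adapted to $\T^3$. After passing to a subsequence, a bounded sequence in $H^1(\T^3)$ should split into a superposition of two kinds of linear profiles: \emph{Euclidean profiles}, which concentrate at a point of $\T^3$ along a scale $N_n^{-1}\to 0$ and hence resemble rescaled data for the quintic NLS on $\R^3$; and \emph{torus-scale profiles}, living at frequency and spatial scale $\sim 1$. The decisive input is a Euclidean approximation lemma: the nonlinear $\T^3$-flow of a concentrating profile is, to leading order and with an error controllable in the $Z$-norm, a rescaling of the corresponding Euclidean quintic flow. Since the defocusing energy-critical quintic NLS on $\R^3$ is globally well-posed with a finite global spacetime bound (Colliander--Keel--Staffilani--Takaoka--Tao), each Euclidean profile contributes only a bounded amount to the $Z$-norm; the torus-scale profiles are handled by the induction hypothesis, as each carries energy strictly below $L_c$ unless it is the sole profile.

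Feeding these nonlinear profiles into the stability lemma then forces a dichotomy: either the $Z$-norms of $u_n$ stay bounded, contradicting $\|u_n\|_{Z(I_n)}\to\infty$, or the decomposition degenerates to a single profile carrying asymptotically all of the energy $L_c$. In the latter case a limiting argument produces a \emph{critical element}: a nonzero global solution $u_c$ of energy $L_c$, infinite $Z$-norm, whose orbit $\{u_c(t)\}$ is precompact in $H^1$ modulo spatial translations. Since the concentrating (Euclidean) alternative for $u_c$ has already been excluded by the approximation lemma, $u_c$ must be almost periodic at the fixed torus scale.

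The last step --- and the one I expect to be hardest --- is rigidity: ruling out such a minimal almost-periodic solution. Here I would exploit conservation of mass and energy on the compact manifold together with a (likely frequency-localized) interaction-Morawetz monotonicity estimate, so that precompactness of the orbit forces the relevant quantity to be simultaneously uniformly bounded and monotone/growing, a contradiction unless $u_c\equiv 0$. The two genuinely torus-specific difficulties, where I expect most of the effort to concentrate, are (i) building the profile decomposition and proving the Euclidean approximation in the atomic $U^p$/$V^p$ framework rather than in classical Strichartz norms, and (ii) making a Morawetz-type rigidity argument function on a compact domain, where the dispersive spreading used on $\R^3$ is unavailable.
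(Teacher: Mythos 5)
First, a point of orientation: the paper does not prove Theorem \ref{thm:black box} at all --- it is imported verbatim as a black box from Ionescu--Pausader \cite{IPT3}, and the paper's entire ``proof'' is that citation. So your proposal must be measured against the argument actually given in \cite{IPT3}. Its skeleton --- large-data local theory and stability in the critical $X^1$/$Z$ framework, a profile decomposition on $\T^3$ splitting bounded $H^1$ sequences into concentrating Euclidean profiles and a scale-one part, and an approximation lemma transferring the global spacetime bound of Colliander--Keel--Staffilani--Takaoka--Tao \cite{Iteam1} for the quintic NLS on $\R^3$ to each concentrating profile --- is indeed the structure of that proof, and your outline reproduces it faithfully up to that point.

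The genuine gap is your final, ``rigidity'' step. On $\T^3$ there is no known interaction-Morawetz (or other monotonicity) estimate capable of ruling out an almost-periodic critical element: the Euclidean Morawetz machinery rests on convexity of weights such as $|x-y|$ and on dispersive spreading, both of which are unavailable on a compact manifold, and this is precisely why the full Kenig--Merle concentration-compactness-plus-rigidity roadmap has never been executed on tori. The proof in \cite{IPT3} is organized so that this step never arises. Because the fourth power of the $Z$-norm is subadditive under partitions of the time interval, the critical quantity can be defined as a limit over intervals whose length tends to zero; the putative blow-up sequence $u_n$ then lives on intervals $I_n$ with $|I_n|\to 0$. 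On such shrinking intervals the scale-one (non-concentrating) component of the profile decomposition is harmless: a fixed $H^1$ datum generates, by local theory, a solution with finite $Z$-norm on some interval of fixed length, and $I_n$ eventually fits inside it (equivalently, its linear flow has vanishing $Z'$-norm as $|I_n|\to 0$, so small-data theory applies). Hence only concentrating Euclidean profiles could produce unbounded $Z$-norm, and these are excluded by the approximation lemma together with \cite{Iteam1}; profiles carrying energy strictly below the critical level are handled by the induction hypothesis, and stability theory then bounds $\|u_n\|_{Z(I_n)}$, yielding the contradiction. No minimal counterexample with precompact orbit is ever constructed (that construction would itself be delicate on $\T^3$, where scaling symmetry is absent), and no rigidity theorem is needed. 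If you replace your Morawetz step by this shrinking-interval reduction, your outline becomes an accurate account of the actual proof.
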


We are now ready to prove Theorem \ref{thm:gwp}.

\begin{proof}[Proof of Theorem \ref{thm:gwp}]
Consider first a subinterval $J=(a,b)\subset I$ and the difference NLS equation
\begin{align}\label{diff eq}
(i\partial_t +\Delta)w=\mu_1 |v+w|^2(v+w)+|v+w|^4(v+w)-|v|^4v
\end{align}
on $J$ with $w(a)=0$ and $v$ a solution of \eqref{quintic nls t3} with $v(a)=u(a)$. The proof of Theorem \ref{thm:gwp} for the interval $J$ follows once we are able to prove that \eqref{diff eq} possesses a unique solution $w\in X^1(J)$. By \eqref{global control quintic} and the definition of the $Z'$-norm, we may partition $I$ into disjoint consecutive intervals $I=\cup_{j=0}^m \,I_j$ with $I_j=[t_j,t_{j+1}]$ such that
\begin{align}
\|v\|_{Z'(I_j)}\leq \eta
\end{align}
for some to be determined small $\eta$. From now on we consider those $I_j$ such that $I_j\cap J\neq\varnothing$ (say $m_1\leq j\leq m_2$). Without loss of generality we may also assume that $J=\cup_{j=m_1}^{m_2}\,I_j$. Suppose at the moment that for a given $I_j$, the solution $w$ satisfies
\begin{align}\label{wj}
\max\{\|w\|_{L_t^\infty H_x^1(I_j)},\|w\|_{X^1(I_j)}\}\leq (2C)^j |J|^{\frac{1}{20}}
\end{align}
with some universal constant $C>0$.

We consider the contraction mapping
$$ \Gamma_j w:=e^{i(t-t_j)\Delta}w(t_j)-i\int_{t_j}^{t}e^{i(t-s)\Delta}(\mu_1|v+w|^2(v+w)+|v+w|^4(v+w)-|v|^4v)(s)\,ds$$
on the set
$$\mathcal{S}_j:=\{w\in X^1(I_j):\max\{\|w\|_{L_t^\infty H_x^1(I_j)},\|w\|_{X^1(I_j)}\}\leq (2C)^j |J|^{\frac{1}{20}}\},$$
which is a complete metric space with respect to the metric
\begin{align}
\rho(u_1,u_2):=\|u_1-u_2\|_{X^1(I_j)} .
\end{align}
We show that by choosing $\eta$ and $|J|$ small, the mapping $\Gamma_j$ defines a contraction on $\mathcal{S}_j$. Using Strichartz estimates, Lemma \ref{lem Nonlinear}, the embedding $X^1\hookrightarrow Z'$ and the inductive hypothesis
\begin{align}
\|w(t_j)\|_{H_x^1}\leq (2C)^{j-1}|J|^{\frac{1}{20}}
\end{align}
we obtain
\begin{align}
& \quad \max\{\|\Gamma_j w\|_{L_t^\infty H_x^1(I_j)},\|\Gamma_j w\|_{X^1(I_j)}\}\nonumber\\
& \leq C\|w(t_j)\|_{H_x^1}+\widetilde{C}\sum_{i=1}^4 (\|w\|_{X^1(I_j)}^{5-i}\|v\|^i_{Z'(I_j)}
+\|v\|_{X^1(I_j)}\|w\|_{X^1(I_j)}^{5-i}\|v\|_{Z'(I_j)}^{i-1})\nonumber\\
& \quad + \widetilde{C}\|w\|^5_{X^1(I_j)}+\widetilde{C}|J|^{\frac{1}{20}}\|v+w\|_{X^1(I_j)}\|v+w\|_{Z'(I_j)}^2\nonumber\\
& \leq \Big[C((2C)^{j-1}|J|^{\frac{1}{20}})\Big]+\Big[\widetilde{C}\sum_{i=1}^3 ((2C)^j|J|^{\frac{1}{20}})^{5-i}\eta^{i}
+\widetilde{C}\|v\|_{X^1(I)}\sum_{i=2}^3((2C)^j|J|^{\frac{1}{20}})^{5-i}\eta^{i-1}\nonumber\\
& \quad +
\widetilde{C}\|v\|_{X^1(I)}((2C)^j|J|^{\frac{1}{20}})^{4}+\widetilde{C}|J|^{\frac{1}{20}}((2C)^j|J|^{\frac{1}{20}})\eta^2\nonumber\\
& \quad + \widetilde{C}|J|^{\frac{1}{20}}\|v\|_{X^1(I)}((2C)^j|J|^{\frac{1}{20}})^2+
\widetilde{C}|J|^{\frac{1}{20}}((2C)^j|J|^{\frac{1}{20}})^3\Big]\nonumber\\
&\quad + \Big[\widetilde{C}|J|^{\frac{1}{20}}\|v\|_{X^1(I)}\eta^2+\widetilde{C}((2C)^j|J|^{\frac{1}{20}})\eta^{4}
+\widetilde{C}\|v\|_{X^1(I)}((2C)^j|J|^{\frac{1}{20}})\eta^{3}\Big]\nonumber\\
& =:A_1+A_2+A_3
\end{align}
for some $\widetilde{C}>0$. We have $A_1=\frac{1}{2}(2C)^j |J|^{\frac{1}{20}}$. By choosing $\eta=\eta(\|v\|_{X^1(I)})=\eta(\|u(a)\|_{H_x^1})$ sufficiently small depending on $\|u(a)\|_{H_x^1}$ we have $A_3\leq
\frac{1}{4}(2C)^j |J|^{\frac{1}{20}}$. For $A_2$, we may choose $|J|\leq \widetilde{\eta}$ with $\widetilde{\eta}$ depending on $0\leq j\leq m$ so that $A_2\leq
\frac{1}{4}(2C)^j |J|^{\frac{1}{20}}$ is valid for all $j$. Indeed, the dependence of $J$ on $j$ can be expressed as on $\|u(a)\|_{H_x^1}$ via $j\leq m\leq C(\|v\|_{Z'(I)})= C(\|u(a)\|_{H_x^1})$, where the last equality is deduced from Theorem \ref{thm:black box}. Similarly we are able to show that by shrinking $\eta$ and $\widetilde{\eta}$ if necessary, we have
$$\|\Gamma_j(u_1)-\Gamma_j(u_2)\|_{X^1(I_j)}\leq \frac{1}{2}\|u_1-u_2\|_{X^1(I_j)}$$
for all $0\leq j\leq m-1$. The proof is analogous and we hence omit the details here. The claim then follows from the Banach fixed point theorem.

Now we close our proof by removing the smallness of $|J|$. By Lemma \ref{zhang uniform kinetic} we have $\|u\|_{L_t^\infty H_x^1(I)}<\infty$. Thus we may choose $(\eta,\widetilde{\eta})=(\eta,\widetilde{\eta})(\|u\|_{L_t^\infty H_x^1(I)})$ in a way such that the previous proof is valid for all $J=[a,b]$ for any $a\in I$ with $|J|\leq \widetilde{\eta}$. We now partition $I$ into disjoint consecutive subintervals $I=\cup_{j=0}^n\, J_j$ with $|J_j|\leq \widetilde{\eta}$ for all $0\leq j\leq n$, and the proof follows by applying the previous step to each $J_j$ and summing up.
\end{proof}




\bibliographystyle{acm}
\bibliography{BGCQNLS}

\end{document}